\documentclass[11pt,reqno]{amsart}
\usepackage{amsfonts,amstext}
\usepackage{amsmath,amstext,amssymb,amscd,mathrsfs,amsthm}
\usepackage{enumerate}
\usepackage[dvipsnames,usenames]{xcolor}

\setlength{\oddsidemargin}{.25in} \setlength{\evensidemargin}{.25in}
\setlength{\textwidth}{6in}

\newtheorem{thm}{Theorem}[section]
\newtheorem{lem}[thm]{Lemma}

\newtheorem{assumption}[thm]{Assumption}
\theoremstyle{definition}
\newtheorem{defn}[thm]{Definition}

\theoremstyle{remark}
\newtheorem{rmk}[thm]{Remark}
\numberwithin{equation}{section}

\usepackage[allbordercolors={1 1 1}]{hyperref}


\usepackage[disable]{todonotes}

\def\grad{\nabla}

\renewcommand{\H}[1][1]{W^{#1,2}(\Omega)}

\renewcommand{\to}{\mathrel{\rightarrow}}
\newcommand{\tinto}{\mathrel{\overset{\gamma}{\to}}}

\newcommand{\R}{\mathbb{R}}



\def\g{\gamma}

\def\t{\tau}

\def\G{\Gamma}
\def\O{\Omega}

\def\e{\epsilon}

\newcommand{\norm}[1]{\left\|#1\right\|}
\newcommand{\abs}[1]{\left|#1\right|}

\def\<{\langle}
\def\>{\rangle}

\def\H{H^1_{\G_0}}


\begin{document}

\author[B. Feng]{Baowei Feng}
\address{Department of Mathematics,
Southwestern University of Finance and Economics,
Chengdu 611130, Sichuan, P. R. China }
 \email{bwfeng@swufe.edu.cn}

 \author[Y.  Guo]{Yanqiu Guo}
\address{Department of Mathematics and Statistics,
Florida International University, Miami FL 33199, USA }
 \email{yanguo@fiu.edu}

\author[M. A. Rammaha]{Mohammad A. Rammaha}
\address{Department of Mathematics, University of Nebraska--Lincoln, Lincoln, NE  68588-0130, USA} \email{mrammaha1@unl.edu}

\title[a structure acoustics model]{On the asymptotic behavior of solutions to a structure acoustics model }

\date{February 12, 2022}
\subjclass[2010]{35L70}
\keywords{structure-acoustics models; wave-plate models;  potential well solutions; uniform decay rates}

\maketitle

\begin{abstract} This article concerns the long term behavior of solutions to a structural acoustic model consisting of a semilinear wave equation defined on a smooth bounded domain $\Omega\subset\mathbb{R}^3$ which is coupled with a Berger plate equation acting on a flat portion of the boundary of $\Omega$. The system is influenced by several competing forces, in particular a source term acting on the wave equation which is allowed to have a supercritical exponent. 

Our results build upon those obtained by Becklin and Rammaha \cite{Becklin-Rammaha2}.  With some restrictions on the parameters in the system and with careful analysis involving the Nehari manifold we obtain global existence of potential well solutions and establish either exponential or algebraic decay rates of energy, dependent upon the behavior of the damping terms.  The main novelty in this work lies in our stabilization estimate, which notably does not generate lower-order terms. Consequently, the proof of the main result is shorter and more concise. 
\end{abstract}

\maketitle

\section{Introduction}\label{S1}

\subsection{The Model}
Structural acoustic interaction models have rich history. These models are well known in both the physical and mathematical literature and go back to the canonical models considered in \cite{Beale76,Howe1998}. For instance, the model studied by Beale  \cite{Beale76} is constructed as follows. Suppose $\O \subset \R^3$ is a bounded smooth domain filled with a fluid  which is at rest, except for an acoustic wave motion. If $u(x,t)$ is the velocity potential of the fluid, so that 
$-\grad u(x,t)$ is the particle velocity, then $u$ satisfies the wave equation
\[
u_{tt} = c^2\Delta u \,\, \text{ in } \O \times (0,T),
\]
where $c$ is the speed of sound in the medium. Further assume that $\G=\partial\O$ is not rigid but subject to small oscillations, where each point $x\in \partial\O$ reacts to the pressure wave like a damped oscillator. Then, the normal displacement $w$ of the boundary into the domain satisfies the ODE:
\[
m(x) w_{tt}(x,t)+ d(x) w_t(x,t) +k(x) w(x,t)=-\rho u_t(x,t)|_{\G} \,\, \text{ on } \G \times (0,T),
\]
where $\rho$ is the density of the fluid. In addition, by assuming the boundary $\G$ is impenetrable, then from the continuity of velocity at the boundary one has:
\[
\partial_\nu u=w_t \,\, \text{ on }\G\times(0,T).
\]

Motivated by the above model,  we study a structure acoustic model influenced with nonlinear forces. Precisely, we study the coupled system of PDEs:
\begin{align}\label{PDE}
\begin{cases}
u_{tt}-\Delta u +g_1(u_t)=f(u) &\text{ in } \O \times (0,T),\\[1mm]
w_{tt}+\Delta^2w+g_2(w_t)+u_t|_{\G}=h(w)&\text{ in }\G\times(0,T),\\[1mm]
u=0&\text{ on }\G_0\times(0,T),\\[1mm]
\partial_\nu u=w_t&\text{ on }\G\times(0,T),\\[1mm]
w=\partial_{\nu_\G}w=0&\text{ on }\partial\G\times(0,T),\\[1mm]
(u(0),u_t(0))=(u_0,u_1),\hspace{5mm}(w(0),w_t(0))=(w_0,w_1),
\end{cases}
\end{align}
where the  initial data reside in the finite energy space, i.e.,
$$(u_0, u_1)\in H^1_{\G_0}(\O) \times L^2(\O) \, \text{ and }(w_0, w_1)\in H^2_0(\G)\times L^2(\G),$$
where the space $H^1_{\G_0}(\O)$ is defined in (\ref{H1}) below.

Here,  $\O\subset\R^3$ is a bounded, open, connected domain with a smooth boundary
$\partial\O=\overline{\G_0\cup\G}$, where $\G_0$ and $\G$ are two disjoint, open, connected sets of positive Lebesgue  measure.  Moreover, $\G$ is a \emph{flat} portion of the boundary of $\O$ and is referred to as the elastic wall. The part $\G_0$ of the boundary $\partial\O$ describes a rigid  wall, while  the coupling takes place on the flexible wall  $\G$.

It is interesting that problem \eqref{PDE} includes geometric elements of one, two and three dimensions. 
In particular, $\Omega$ is a three-dimensional region in which a nonlinear wave equation for $u$ is defined. 
The boundary of $\Omega$ contains a flat portion $\Gamma$, a two-dimensional plane, in which a plate equation for $w$ is defined. Finally, the boundary of $\Gamma$ is a smooth curve $\partial \Gamma$, which is a one-dimensional geometric object, on which a boundary condition for $w$ is imposed.

The nonlinearities $f$ and $h$ represent source terms acting on the wave and plate equations respectively, where $f(u)$ is of a supercritical order and both source terms are allowed to have  ``bad" signs  which  may cause instability (blow up) in  finite time.  In addition, the system is influenced by two other competing forces, namely $g_1(u_t)$ and $g_2(w_t)$ representing frictional damping terms acting on the wave and plate equations, respectively.  The presence of frictional damping is necessary to stabilize the system- otherwise nonlinear sources can lead to blow up in finite time.  The vectors $\nu$ and $\nu_\G$ denote the outer normals to $\G$ and $\partial\G$; respectively.

Models such as (\ref{PDE}) arise in the context of modeling gas pressure in an
acoustic chamber which is surrounded by a combination of rigid and
flexible walls. The pressure in the chamber is described by the solution
to a wave equation, while vibrations of the flexible wall are described
by the solution to a coupled a Berger plate equation. We refer the reader to \cite{Chueshov-1999} and the references quoted therein on the Berger model.

In system (\ref{PDE}), the coupling of the wave equation for $u$ and the plate equation for $w$ are through the term $u_t|_{\Gamma}$. 
Note, the solution $(u,u_t)$ for the wave equation belongs to the finite energy space $H^1_{\G_0}(\O) \times L^2(\O)$. 
Therefore, $u_t$ belongs to the space $L^2(\Omega)$, and in general, without additional regularity, one cannot take the trace of an $L^2(\Omega)$ function on $\Gamma$. Therefore, in the definition of weak solutions below, namely, Definition \ref{def:weaksln}, we test $u_t|_{\Gamma}$ with a test function $\psi$, and pass the time derivative to $\psi$ in the spirit of weak derivative, and obtain the term $\int_{\Gamma} u|_{\Gamma}(t) \psi_t(t)d\Gamma -
\int_{\Gamma} u|_{\Gamma}(0) \psi_t(0)d\Gamma-\int_0^t \int_{\Gamma} u|_{\Gamma} \psi_t d\Gamma d\tau$,
which makes perfect sense since $u\in H^1_{\G_0}(\O) $ regular enough to take the boundary trace on $\Gamma$. 
Moreover, in the paper  \cite{Becklin-Rammaha1}, Becklin and Rammaha studied a similar model with restoring source terms but no damping term, 
and they pointed out that there is a hidden regularity for $u_t$ on $\Gamma$, which was $u_t|_{\Gamma \times (0,T)} \in H^{-2/3}(\Gamma \times (0,T))$ by assuming additional regularity on initial data and that the source term was subcritical.
The point is that the term $u_t|_{\Gamma}$ is an important element connecting the dynamics of $u$ and $w$ and can cause difficulties in the analysis of model (\ref{PDE}).

One of the novelties  of this manuscript lies in that the important stabilization estimate (\ref{4-12}) presented in Lemma \ref{lem4-1} does not contain lower-order terms on the right-hand side of the inequality. This feature greatly shortens the proof, because the standard lengthy compactness-uniqueness argument to absorb the lower-order terms is completely avoided. In particular, the estimate in subsection \ref{est-I2} includes some new idea and is applicable to the study of energy decay of other related systems.

\vspace{0.1 in}

\subsection{Literature Overview}
 As we mentioned earlier, structural acoustic interaction models have rich history and they go back to the canonical models considered in \cite{Beale76,Howe1998}. 
In the context of stabilization and controllability of structural acoustic models there is a very large body of literature. We refer the reader to the monograph by Lasiecka \cite{Las2002} which provides a  comprehensive overview and quotes many works on these topics. Other related contributions worthy of mention include \cite{Avalos2,Avalos1,Avalos3,Avalos4,Cagnol1,MG1,MG3,LAS1999}.

This manuscript focuses on potential well solutions of system \eqref{PDE}. The study of potential well solutions for nonlinear hyperbolic equations has a long history. For example, Payne and Sattinger \cite{PS} considered a nonlinear hyperbolic equation in the canonical form:
\begin{align}  \label{canonical}
u_{tt} = \Delta u + f(u),  \;\; \text{with} \;\; u(0)=u_0,  \;\;  u_t(0)=u_1,
\end{align}
such that $u=0$ on the boundary of a smooth bounded domain $\Omega\subset \mathbb R^n$. An important quantity for equation (\ref{canonical}) is the potential energy functional 
$J(u)=\frac{1}{2}\|u\|_{L^2(\Omega)}^2 -  \int_{\Omega} F(u) dx$ where $F(u)= \int_0^u f(s) ds$. 
The depth of the potential well $d$ can be defined as the mountain pass level, i.e., $d=\inf_{u \not =0} \sup_{\lambda>0} J(\lambda u)$.
Assume the initial total energy and the initial potential energy $J(u_0)$ are both less than $d$.
Then, if $\frac{d}{d\lambda} J(\lambda u) \geq 0$ for $0<\lambda \leq 1$, i.e., $J$ is increasing along any ray from the origin, then the weak solution is global in time; but, if $\|u\|_{L^2}^2 <  \int_{\Omega} uf(u) dx$, the solution blows up in finite time (see \cite{PS}). 
The global existence of potential well solutions for \eqref{canonical} can also be found in an earlier work  by Sattinger \cite{Sattiinger68}. 
In the same spirit, we show the global existence of potential well solutions for the structure acoustics model (\ref{PDE}), and the blow-up of solutions will be considered in a future paper.

System (\ref{PDE}) involves competing forces. In particular, nonlinear source terms $f(u)$ and $h(w)$ are competing with nonlinear frictional-type damping terms $g_1(u_t)$ and $g_2(w_t)$. 
A classical result on wave equations with nonlinear damping term $|u_t|^{m-1}u_t$ and source term $|u|^{p-1}u$ was established by Georgiev and Todorova \cite{GT} in 1994.
In particular, if the damping term dominates the source term ($m\geq p$), then weak solutions are global in time; whereas, if the source term surpasses the damping term $p>m$, then weak solutions may blow up if initial energy is large enough. 
An extension to a supercritical source term ($3\leq p < 6$ in 3D) was obtained by Bociu and Lasiecka \cite{BL3, BL2, BL1}.
Furthermore, for a nonlinear wave equation with a source term of rapid polynomial growth rate (including the range $p\geq 6$ in a 3D periodic domain), Guo \cite{Guo98} showed the global well-posedness of weak solutions, provided the damping term has sufficiently fast growth rate 
($m\geq \frac{3}{2}p - \frac{5}{2}$ if $p\geq 6$). There are many other interesting works concerned with the competition of source terms and various types of damping terms in nonlinear wave equations. See, for example, \cite{BLR1, GR, GRSTT,MOHNICK1,MOHNICK2,PRT-1,PRT-p-Laplacain} and references therein.

\section{Preliminaries and main results}

\subsection{Notation}

\noindent{}Throughout the paper the following notational conventions for $L^p$ space norms and standard inner products will be used:
\begin{align*}
&||u||_p=||u||_{L^p(\O)}, &&(u,v)_\O = (u,v)_{L^2(\O)},\\
&|u|_p=||u||_{L^p(\G)},&&(u,v)_\G = (u,v)_{L^2(\G)}.
\end{align*}
We also use the notation  $\g u$ to denote the \emph{trace} of $u$ on $\G$ and we write $\frac{d}{dt}(\g u(t))$ as $\g u_t$ or $\g u'$. Occasionally, we also use the notation $u_{|_\G}$ to mean $\g u$.
As is customary, $C$ shall always denote a positive constant which may change from line to line.

Further, we put
\begin{align}  \label{H1}
\H(\O):=\{u\in H^1(\O):u|_{\G_0}=0\}.
\end{align}
It is well-known that the standard norm
$\norm{u}_{\H(\O)}$ is equivalent to $\norm{\grad u}_2$. Thus, we put:
\begin{align} \label{normH1}
\norm{u}_{\H(\O)} = \norm{\grad u}_2,\,\, (u,v)_{\H(\O)}=(\grad u,\grad v)_\O.
\end{align}
For a similar reason, we put:
\begin{align}    \label{normH2}
\norm{w}_{H_0^2(\G)}= \abs{\Delta w}_2, \,\, (w,z)_{H^2_0(\G)}=(\Delta w,\Delta z)_\G.
\end{align}
For convenience and brevity, we shall  frequently use the notation:
\[
\norm{u}_{1, \O}= \norm{\grad u}_2,\,\,\, \norm{w}_{2, \G}= \abs{\Delta w}_2.
\]
\noindent{}With $Y$ is a Banach space, we denote the duality pairing between the dual space $Y'$ and $Y$ by
$\<\psi,y\>_{Y',Y}$, or simply by  $\<\cdot,\cdot\>$. That is,
\begin{align*}
\< \psi,y \> =\psi(y)\text{ for }y\in Y,\, \psi \in Y'.
\end{align*}

\noindent{}Throughout the paper, the following Sobolev imbeddings will be used often without mention:
\begin{align*}
\begin{cases}
H^{1-\epsilon}(\O)\hookrightarrow L^{\frac{6}{1+2\epsilon}}(\O)\text{ for }\epsilon\in[0,1],  \vspace{.05in}\\
H^{1-\epsilon}(\O) \tinto H^{\frac{1}{2}-\epsilon}(\G)\hookrightarrow L^{\frac{4}{1+2\epsilon}}(\G)\text{ for }
\epsilon\in [0,\frac{1}{2}],  \vspace{.05in} \\
H^1(\Gamma)\hookrightarrow L^q(\G) \text{ for all } 1\leq q<\infty.
\end{cases}
\end{align*}
Finally, we remind the reader with the following interpolation inequality:
\begin{equation} \label{inter}
\norm{u}^2_{H^{\theta}(\O)} 
\leq \e \norm{u}^2_{1,\O}+C(\e,\theta)\norm{u}_2^2,
\end{equation}
for all $0\leq \theta <1$ and $\e>0$.

\vspace{0.1 in}

\subsection{Weak Solutions} Throughout the paper, we study  (\ref{PDE}) under the following assumptions.

\begin{assumption}\label{ass}
\hfill
\begin{description}
\item[Damping terms] $g_1, \, g_2:\R \rightarrow \R$ are continuous and monotone increasing functions with $g_1(0)=g_2(0)=0$.  In addition, the following growth conditions at infinity hold: there exist positive constants $\alpha$ and $\beta$ such that, for $|s|\geq1$,
\begin{align*}
&\alpha|s|^{m+1}\leq g_1(s)s\leq\beta|s|^{m+1},\text{ with }m\geq1,\\
&\alpha|s|^{r+1}\leq g_2(s)s\leq\beta|s|^{r+1},\text{ with }r\geq1.
\end{align*}
\item[Source terms] $f$ and $h$ are functions in $C^1(\R)$ such that
\begin{align*}&|f'(s)|\leq C(|s|^{p-1}+1),\text{ with }1\leq p< 6,\\
&|h'(s)|\leq C(|s|^{q-1}+1),\text{ with }1\leq q<\infty.
\end{align*}
\item[Parameters] \,\, $p\frac{m+1}{m}<6$.
\end{description}
\end{assumption}

\begin{rmk}\label{rem1}
 As the following bounds will be used often throughout the paper, it is worthy of note that the above assumption  implies that
    \begin{align}\label{1.2}
    \begin{cases}
     |f(u)|\leq C(|u|^p+1),\quad |f(u)-f(v)|\leq C(|u|^{p-1}+|v|^{p-1}+1)|u-v|, \vspace{.1in} \\
    |h(w)| \leq C(|w|^q +1),\,\, |h(w)-h(z)|\leq C(|w|^{q-1}+|z|^{q-1}+1)|w-z|.
    \end{cases}
    \end{align}
\end{rmk}

The following assumption will be needed for establishing the result on the uniqueness of solutions.
\begin{assumption}\label{ass2}
For $p>3$, we assume that  $f\in C^2(\R)$ with $|f''(u)|\leq C(|u|^{p-2}+1)$ for all $u\in\R$.
\end{assumption}

\noindent{} We begin by introducing the definition of a suitable weak solution for \eqref{PDE}.
\begin{defn}\label{def:weaksln}
A pair of functions $(u,w)$ is said to be a \emph{weak solution} of \eqref{PDE} on the interval $[0,T]$ provided:
    \begin{enumerate}[(i)]
        \setlength{\itemsep}{5pt}
        \item\label{def-u} $u\in C([0,T];H^1_{\G_0}(\O))$, $u_t\in C([0,T];L^2(\O))\cap L^{m+1}(\O\times(0,T))$,
        \item\label{def-v} $w\in C([0,T];H^2_0(\G))$, $w_t\in C([0,T];L^2(\G))\cap L^{r+1}(\G\times(0,T))$,
        \item\label{def-uic} $(u(0),u_t(0))=(u_0,u_1) \in H^1_{\G_0}(\O)\times L^2(\O)$,
        \item\label{def-wic} $(w(0),w_t(0))=(w_0,w_1) \in H^2_0(\G)\times L^2(\G)$,
        \item\label{def-ws} The functions $u$ and $w$ satisfy the following variational  identities
        for all $t\in[0,T]$:
\begin{align}\label{wkslnwave}
(u_{t}(t),  \phi(t))_\O & - (u_1,\phi(0))_\O-\int_0^t ( u_t(\tau), \phi_t(\tau) )_\O d\tau
+\int_0^t (\nabla u(\tau), \nabla\phi(\tau) )_\O d\tau \notag \\
&-\int_0^t  (w_t(\tau), \g\phi(\tau) )_\G d\tau+\int_0^t\int_\Omega g_1(u_t(\tau))\phi(\tau) dxd\tau  \notag\\
&=\int_0^t\int_\Omega f(u(\tau))\phi(\tau) dxd\tau,
\end{align}

\begin{align}\label{wkslnplt}
(w_t(t) & + \g u(t),\psi(t) )_\G  -(w_1 +\g u(0) ,\psi(0))_\G -\int_0^t (w_t(\tau), \psi_t(\tau) )_\G d\tau \notag \\
& -\int_0^t (\g u(\tau), \psi_t(\tau) )_\G d\tau+\int_0^t (\Delta w(\tau), \Delta\psi(\tau) )_\G d\tau \notag \\
&+ \int_0^t\int_{\G}g_2(w_t(\tau))\psi(\tau) d\G d\tau=\int_0^t\int_{\G}h(w(\tau))\psi(\tau) d\G d\tau,
\end{align}
for all test functions $\phi$ and $\psi$ satisfying:
$\phi\in C([0,T];H^1_{\G_0}(\O))  \cap L^{m+1}(\O\times(0,T))$, $\psi\in C\left([0,T];H^2_0(\G)\right)$  with
$\phi_t\in L^1(0,T;L^2(\O))$, and $\psi_t\in L^{1}(0,T;L^2(\G))$.
    \end{enumerate}
\end{defn}

As mentioned earlier, our work in this paper is based on the existence results which were established in \cite{Becklin-Rammaha2}. For the reader's convenience, we first summarize the important results in \cite{Becklin-Rammaha2}.

\begin{thm} [{\bf Local and  global weak solutions \cite{Becklin-Rammaha2}}]  \label{t:1}
    Under the validity of Assumption \ref{ass}, then there exists a local weak soluition $(u,w)$ to \eqref{PDE} in the sense of Definition \ref{def:weaksln}, defined on $[0,T_0]$ for some $T_0>0$ depending on the initial quadratic energy $E(0)$, where the quadratic energy is defined as
\begin{align}     \label{qua}
E(t) :=\frac{1}{2}\left(\|u_t(t)\|_2^2 +|w_t(t)|_2^2    +\|\nabla u(t)\|_2^2+|\Delta w(t)|_2^2\right).
\end{align}
\begin{itemize}
\item ~~$(u,w)$ satisfies the following energy identity for all $t\in [0,T_0]$:
\begin{align}\label{energy}
    E(t)&+\int_0^t\int_\O g_1(u_t)u_tdxd\t+\int_0^t\int_\G g_2(w_t)w_td\G d\t\notag\\
    &=E(0)+\int_0^t \int_\O f(u)u_tdxd\t+\int_0^t\int_\G h(w)w_td\G d\t.
\end{align}
\item In addition to Assumption  \ref{ass}, if we assume that $u_0\in L^{p+1}(\O)$, $p\leq m$ and $q\leq r$, then the said solution $(u,w)$  is a global weak solution and $T_0$ can be taken arbitrarily large.
\item If Assumption \ref{ass} is valid, and if we additionally assume that $u_0\in L^{3(p-1)}(\O)$ and $m \geq 3p-4$ when  $p>3$, then weak solutions of \eqref{PDE} are unique.

\item If Assumptions \ref{ass} and \ref{ass2} are valid, and if we further assume that $u_0\in L^{\frac{3(p-1)}{2}}(\O)$ then, weak solutions of \eqref{PDE} are unique.

\end{itemize}
\end{thm}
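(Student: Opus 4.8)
\emph{Proof proposal.} I would establish the four assertions in turn, with the quadratic energy $E(t)$ of (\ref{qua}) as the driving quantity.

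\emph{Local weak solutions.} The plan is the classical ``regularize--solve--remove'' scheme. Since $f$ is only of \emph{supercritical} growth ($p<6$ in dimension three), I first replace it by a globally Lipschitz truncation $f_K$ coinciding with $f$ on $\{|s|\le K\}$; no truncation of $h$ is needed because $q<\infty$ and $H^2_0(\G)\into L^q(\G)$ on the two-dimensional set $\G$. Writing $U=(u,u_t,w,w_t)$ in the phase space $\H(\O)\times L^2(\O)\times H^2_0(\G)\times L^2(\G)$, I would treat the $K$-truncated system by a Faedo--Galerkin scheme (or, equivalently, by nonlinear semigroup theory). The crucial structural point is that, on testing the wave equation by $u_t$ and the plate equation by $w_t$ and adding, the interface terms $-(w_t,\g u_t)_\G$ (produced by integrating $-\Delta u$ by parts and using $\partial_\nu u=w_t$) and $+(\g u_t,w_t)_\G$ (coming from the coupling $u_t|_\G$ in the plate equation) \emph{cancel identically}, so the linear dynamics is conservative and the damping $g_1(u_t),g_2(w_t)$ is dissipative; here $u_t|_\G$ is always understood in the weak sense of Definition \ref{def:weaksln}, the time derivative transferred onto the test function, so that only $\g u\in L^2(\G)$ is ever used. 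One then derives uniform energy bounds on the Galerkin approximants, passes to the limit identifying the weak limits of $g_1(\cdot),g_2(\cdot)$ by the Minty--Browder monotonicity device and those of the sources by Aubin--Lions compactness, and obtains a weak solution of the truncated problem with its energy identity. Since $|f_K(u)|\le C(|u|^p+1)$, a Gronwall argument using $\H(\O)\into L^6(\O)$ bounds the energy by a constant depending only on $E(0)$ on an interval $[0,T_0]$ with $T_0=T_0(E(0))$ \emph{independent of $K$}; taking $K$ large then forces $f_K(u)=f(u)$ on $[0,T_0]$, which gives the local weak solution of (\ref{PDE}).

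\emph{The energy identity and global solutions.} The above limit only yields the energy \emph{inequality}; to promote it to the identity (\ref{energy}) I would invoke the parameter bound $p\tfrac{m+1}{m}<6$, since by H\"older in space--time
\[
\int_0^{T_0}\!\!\int_\O |f(u)|\,|u_t|\,dx\,d\t
\le C\Big(\int_0^{T_0}\!\!\int_\O\big(|u|^{p\frac{m+1}{m}}+1\big)dx\,d\t\Big)^{\frac{m}{m+1}}\|u_t\|_{L^{m+1}(\O\times(0,T_0))}<\infty,
\]
because $u\in L^\infty(0,T_0;L^6(\O))$; hence the right-hand side of (\ref{energy}) is well defined and $(u_t,w_t)$ may legitimately be used as a test pair after a mollification in time, yielding equality. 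For global existence, assume $u_0\in L^{p+1}(\O)$, $p\le m$ and $q\le r$. Then $p\le m$ gives $u_t\in L^{m+1}(\O\times(0,T))\subseteq L^{p+1}(\O\times(0,T))$, so $\|u(t)\|_{L^{p+1}}$ is propagated from $\|u_0\|_{L^{p+1}}$; in the energy identity, $\int\int f(u)u_t$ and $\int\int h(w)w_t$ are absorbed into the dissipations $\alpha\int\int|u_t|^{m+1}$ and $\alpha\int\int|w_t|^{r+1}$ (using $p\le m$, $q\le r$) up to lower-order terms controlled by $\|u(t)\|_{L^{p+1}}$ and $\|w(t)\|_{L^{q+1}}$ --- the latter without any extra hypothesis on $w_0$, since $H^2_0(\G)\into L^{q+1}(\G)$ --- and a Gronwall/continuation argument then bounds $E(t)$ a priori on every finite interval, so $T_0$ can be taken arbitrary.

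\emph{Uniqueness.} Given two weak solutions with the same data, I would set $y=u_1-u_2$, $z=w_1-w_2$, subtract the equations, and test (after mollification) by $(y_t,z_t)$. The interface terms cancel as above, monotonicity of $g_1,g_2$ makes the damping difference nonnegative so it is discarded, and there remains
\[
\frac{d}{dt}\Big(\tfrac12\|y_t\|_2^2+\tfrac12|z_t|_2^2+\tfrac12\|\grad y\|_2^2+\tfrac12|\mlap z|_2^2\Big)
\le\int_\O\big(f(u_1)-f(u_2)\big)y_t\,dx+\int_\G\big(h(w_1)-h(w_2)\big)z_t\,d\G.
\]
The $h$-term is routine on the two-dimensional $\G$. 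For the $f$-term with $p>3$, (\ref{1.2}) gives $|f(u_1)-f(u_2)|\le C(|u_1|^{p-1}+|u_2|^{p-1}+1)|y|$, and H\"older with the exponent triple $(3,6,2)$ (note $\tfrac13+\tfrac16+\tfrac12=1$) gives
\[
\int_\O|u_i|^{p-1}|y|\,|y_t|\,dx\le\|u_i\|_{3(p-1)}^{\,p-1}\|y\|_6\|y_t\|_2\le C\,\|u_i\|_{3(p-1)}^{\,p-1}\big(\|\grad y\|_2^2+\|y_t\|_2^2\big).
\]
Since $m\ge 3p-4$ is precisely $m+1\ge 3(p-1)$, the known $u_{i,t}\in L^{m+1}(\O\times(0,T))$ lies in $L^{3(p-1)}(\O\times(0,T))$, whence $u_0\in L^{3(p-1)}(\O)$ yields $u_i\in C([0,T];L^{3(p-1)}(\O))$; thus $\t\mapsto\|u_i(\t)\|_{3(p-1)}^{\,p-1}\in L^1(0,T)$ and Gronwall forces $(y,z)\equiv(0,0)$. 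Under Assumption \ref{ass2}, the $C^2$ bound permits a second-order expansion $f(u_1)-f(u_2)=f'(u_2)y+Q$ with $|Q|\le C(|u_1|^{p-2}+|u_2|^{p-2}+1)|y|^2$; integrating the leading term by parts in time and balancing the resulting quantities in the manner of Bociu--Lasiecka \cite{BL1,BL2,BL3} lowers the integrability demanded of $u_i$ to $L^{3(p-1)/2}(\O)$, which is why $u_0\in L^{3(p-1)/2}(\O)$ then suffices.

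\emph{The main obstacle.} The hard part, present already in the local existence step and resurfacing in uniqueness, will be the interaction between the supercritical source $f(u)$ (for $3<p<6$) and the interface term $u_t|_\G$: since $u_t\in L^2(\O)$ has no trace, the approximation and limit machinery must be arranged so that $u_t|_\G$ enters only through the cancelling energy pairing, while supercriticality denies any trace or higher-regularity a priori bound for $u$ beyond those furnished by the energy and the damping exponent --- forcing exactly the tight H\"older balances and the integrability-propagation arguments recorded above.
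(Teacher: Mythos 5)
There is nothing in this paper to compare your argument against: Theorem \ref{t:1} is stated without proof, being imported verbatim from Becklin--Rammaha \cite{Becklin-Rammaha2}, and the present authors only use it as a black box. The only information the paper gives about the actual proof is the remark following the proof of Theorem \ref{thm3-1}: in \cite{Becklin-Rammaha2} the local theory is built on monotone operator theory and nonlinear semigroups, with Kato's theorem producing a strong solution $(u,w)\in W^{1,\infty}(0,T;X)$ for Lipschitz (subcritical) sources, followed by an extension to supercritical sources yielding $(u,w)\in C([0,T);X)$. Your truncation strategy (solve for $f_K$, obtain a $K$-independent existence time from an energy bound, then remove the truncation) reproduces the subcritical-to-supercritical architecture, but your engine is Faedo--Galerkin plus Minty--Browder and Aubin--Lions rather than the semigroup machinery. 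That difference is not cosmetic: a bare Galerkin limit typically delivers only weak continuity of $t\mapsto (u(t),w(t))$ in $X$ and an energy \emph{inequality}, whereas Definition \ref{def:weaksln} demands strong continuity $u\in C([0,T];H^1_{\Gamma_0}(\Omega))$, $w\in C([0,T];H^2_0(\Gamma))$ and the theorem asserts the \emph{identity} \eqref{energy}; the paper explicitly contrasts this with the Galerkin-based companion paper \cite{Becklin-Rammaha1}, where only weak continuity was obtained. The semigroup route gets both for free from the density of strong solutions, and the strong continuity is, as the authors stress, essential for the intermediate value argument in Section 3. Your invocation of $p\frac{m+1}{m}<6$ to make $\int\!\!\int f(u)u_t$ finite correctly identifies the role of that parameter restriction.

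The one place where your sketch glosses over a genuine difficulty is uniqueness. For weak solutions with supercritical $f$, the difference $(y_t,z_t)$ is \emph{not} an admissible test pair (it has only the regularity $L^2$ in space, $L^{m+1}$ in space-time), and ``after mollification'' does not rescue this: the whole point of the Bociu--Lasiecka program \cite{BL1,BL2,BL3} is that one cannot write the energy identity for the difference of two weak solutions directly, and must instead work with carefully regularized test functions and control the commutator errors, which is where the hypotheses $u_0\in L^{3(p-1)}(\Omega)$ with $m\geq 3p-4$, or alternatively Assumption \ref{ass2} with $u_0\in L^{3(p-1)/2}(\Omega)$, are actually consumed. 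Your H\"older count $(3,6,2)$ and the propagation $u\in C([0,T];L^{3(p-1)}(\Omega))$ from $u_t\in L^{m+1}(\Omega\times(0,T))$ correctly explain \emph{why} those exponents appear, but the step from the formal differential inequality to a rigorous Gronwall estimate is the substance of the uniqueness proof in \cite{Becklin-Rammaha2}, not a routine mollification.
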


\vspace{0.1 in}

\subsection{Potential Well}
In this section we introduce the concepts of Nehari manifold and potential well. 

Let us first impose additional assumptions on source terms $f$ and  $h$.
\begin{assumption}\label{ass1}
\hfill
\begin{itemize}
\item
 There exists a nonnegative function
$F(s)\in C^1(\mathbb{R})$ such that $F'(s)=f(s)$, and $F$ is
homogeneous of order $p+1$, i.e., $F(\lambda s)=\lambda^{p+1}F(s)$,
for $\lambda>0,\ u\in\mathbb{R}$.
\item There exists a nonnegative function $H(s)\in C^1(\mathbb{R})$
such that $H'(s)=h(s)$, and $H$ is homogeneous of order $q+1$, i.e.,
$H(\lambda s)=\lambda^{q+1}H(s)$, for $\lambda>0,\ s\in\mathbb{R}$.
\end{itemize}
\end{assumption}

\begin{rmk}\label{rmk3-1}
From Euler homogeneous function theorem we infer that
\begin{align}\label{3-3}
uf(u)=(p+1)F(u),\ \ wh(w)=(q+1)H(w).
\end{align}
It follows from Assumption \ref{ass} that there exists a positive
constant $M$ such that
\begin{align}\label{3-4}
F(u)\leq M(|u|^{p+1}+1), H(w)\leq M(|w|^{q+1}+1),
\end{align}
which, noting that $F$ and $H$ are homogeneous, yields that
\begin{align}\label{3-5}
F(u)\leq M|u|^{p+1},\ \ H(w)\leq M|w|^{q+1}.
\end{align}
Moreover, we know that $f$ is homogeneous of order $p$ and
$h$ is homogeneous of order $q$, and we see from (\ref{3-3}) and (\ref{3-5})
that
\begin{align}\label{3-6}
|f(u)|\leq (p+1) M|u|^{p},\ \, |h(w)|\leq (q+1)M|w|^q.
\end{align}
\end{rmk}

Put $X=H^1_{\Gamma_0}(\Omega)\times H^2_0(\Gamma)$. 
According to (\ref{normH1}) and (\ref{normH2}), $X$ is endowed by the natural norm:  
\begin{align}    \label{normX}
\norm{(u,w)}_X  =   (\norm{\nabla u}_2^2 +  |\Delta w|_2^2)^{1/2}. 
\end{align}
Define the nonlinear functional $\mathcal{J}:X\to \mathbb{R}$ by
\begin{align}\label{3-8}
\mathcal{J}(u,w):=\frac{1}{2}(\|\nabla u\|^2_2 + |\Delta w|^2_2) -\int_\Omega F(u)dx-\int_\Gamma H(w) d\Gamma.
\end{align}
Then,  the \emph{potential energy} of the system is given by $\mathcal J(u(t),w(t))$. 
The Fr\'{e}chet derivative of $\mathcal{J}$ at $(u,w)\in X$ is given
by
\begin{align}\label{fre}
\langle\mathcal{J}'(u,w),(\phi,\psi)\rangle
=\int_\Omega\nabla
u\cdot\nabla\phi dx+\int_\Gamma\Delta w\cdot\Delta\psi d\Gamma -\int_\Omega f(u)\phi dx-\int_\Gamma h(w)\psi d\Gamma,
\end{align}
for $(\phi,\psi)\in X$.

The \emph{Nehari manifold} associated with the functional $\mathcal J$ is defined as
\begin{align}   \label{Nehari}
\mathcal{N}:=\left\{(u,w)\in X\backslash\{(0,0)\}:
\langle\mathcal{J}'(u,w),(u,w)\rangle=0\right\}.
\end{align}
The Nehari manifold is a manifold of functions, whose definition is motivated by the work of Zeev Nehari \cite{nehari1960class, nehari1961characteristic} 
for nonlinear second-order differential equations.

Due to (\ref{Nehari}), \eqref{fre} and \eqref{3-3}, we have
\begin{align}\label{3-10}
\mathcal{N}=\Big\{(u,w)\in X\backslash\{(0,0)\}: 
\|\nabla u\|^2_2+|\Delta w|^2_2 & =(p+1)\int_\Omega F(u)dx \nonumber \\ & +(q+1)\int_\Gamma
H(w)d\Gamma \Big\}.
\end{align}

We define the \emph{potential well} associated with the potential energy $\mathcal{J}(u,w)$ by 
\begin{align}    \label{def-W}
\mathcal{W}&:=\{(u,w)\in X: \mathcal{J}(u,w)<d\},
\end{align}
where the \emph{depth of the potential well} $\mathcal{W}$ is defined as
\begin{align}    \label{depth}
d:=\inf_{(u,w)\in\mathcal{N}}\mathcal{J}(u,w).
\end{align}

In order to make sure that the set $\mathcal W$ is non-empty, we need to verify that $d$ is strictly positive. 
The positivity of $d$ is provided by the following lemma, under certain assumptions.
\begin{lem}\label{lem3-1}
Let Assumption \ref{ass} and Assumption \ref{ass1} hold. Let
$1<p\leq 5$ and $q>1$, then $d>0$.
\end{lem}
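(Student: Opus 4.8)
The plan is to show that $\mathcal{J}$ is bounded below by a positive constant on the Nehari manifold $\mathcal N$, exploiting the Sobolev embeddings $H^1_{\G_0}(\O) \hookrightarrow L^{p+1}(\O)$ (valid since $p \le 5$ means $p+1 \le 6$) and $H^2_0(\G) \hookrightarrow L^{q+1}(\G)$ (valid for all finite $q$ since $\G$ is two-dimensional). First I would note that for $(u,w) \in \mathcal N$, combining the definition \eqref{3-8} with the Nehari identity \eqref{3-10} and the homogeneity relations \eqref{3-3} gives
\begin{align*}
\mathcal{J}(u,w) = \left(\frac{1}{2} - \frac{1}{p+1}\right)(p+1)\int_\O F(u)\,dx + \left(\frac{1}{2} - \frac{1}{q+1}\right)(q+1)\int_\G H(w)\,d\G,
\end{align*}
or equivalently, solving for the potential terms,
\begin{align*}
\mathcal{J}(u,w) = \frac{p-1}{2(p+1)}\|\nabla u\|_2^2 + \frac{q-1}{2(q+1)}|\Delta w|_2^2 \;\;\text{if } p=q,
\end{align*}
and in general one can write $\mathcal J(u,w) \ge \theta\, \|(u,w)\|_X^2$ for $\theta = \min\{\frac{p-1}{2(p+1)}, \frac{q-1}{2(q+1)}\} > 0$ after massaging the two representations; the point is that on $\mathcal N$ the functional $\mathcal J$ controls $\|(u,w)\|_X^2$ from below up to a constant, provided $\|(u,w)\|_X$ is bounded away from zero.

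The heart of the argument is therefore to establish a positive lower bound $\delta > 0$ on $\|(u,w)\|_X$ over $\mathcal N$. For this I would start from the Nehari constraint \eqref{3-10}, apply \eqref{3-5} to bound $\int_\O F(u)\,dx \le M \|u\|_{p+1}^{p+1}$ and $\int_\G H(w)\,d\G \le M |w|_{q+1}^{q+1}$, and then invoke the Sobolev embeddings to get
\begin{align*}
\|\nabla u\|_2^2 + |\Delta w|_2^2 \le C_1 \|\nabla u\|_2^{p+1} + C_2 |\Delta w|_2^{q+1},
\end{align*}
for constants $C_1, C_2$ coming from $M$, the Euler factors $p+1$, $q+1$, and the embedding constants. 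Since $p > 1$ and $q > 1$, both exponents on the right exceed $2$, so writing $a = \|\nabla u\|_2^2$, $b = |\Delta w|_2^2$ the inequality reads $a + b \le C_1 a^{(p+1)/2} + C_2 b^{(q+1)/2}$. A short convexity/elementary argument shows this forces $a + b \ge \delta$ for some $\delta = \delta(C_1, C_2, p, q) > 0$ whenever $(u,w) \ne (0,0)$: indeed if $a+b$ were small then each of $a^{(p+1)/2}$, $b^{(q+1)/2}$ would be much smaller than $a$, $b$ respectively (being a higher power of a small number), contradicting the inequality unless $a = b = 0$.

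Combining the two steps: on $\mathcal N$ we have $\|(u,w)\|_X^2 \ge \delta$ and $\mathcal J(u,w) \ge \theta \|(u,w)\|_X^2 \ge \theta \delta > 0$, hence $d = \inf_{\mathcal N} \mathcal J \ge \theta\delta > 0$. The main obstacle I anticipate is purely bookkeeping: cleanly converting the two different-looking expressions for $\mathcal J(u,w)$ on $\mathcal N$ (one in terms of $\int F$, $\int H$, one in terms of the $X$-norm) into a single clean lower bound $\theta\|(u,w)\|_X^2$ — one has to be a little careful because the coefficients $\frac{p-1}{2(p+1)}$ and $\frac{q-1}{2(q+1)}$ attach to the potential-energy pieces rather than directly to $\|\nabla u\|_2^2$ and $|\Delta w|_2^2$, so one should either use the Nehari identity to trade potential energy for kinetic-type $X$-norm terms, or simply keep $\mathcal J(u,w) = \frac{p-1}{2}\cdot\frac{2}{p+1}\int(p+1)F(u) + \cdots$ and bound below by the smaller of the two fractions times the whole Nehari sum, which by \eqref{3-10} equals $\|(u,w)\|_X^2$. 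Everything else is a routine application of the stated Sobolev embeddings and the growth bound \eqref{3-5}.
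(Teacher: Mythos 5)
Your proposal is correct and follows essentially the same route as the paper: first the lower bound $\mathcal J(u,w)\geq\bigl(\tfrac12-\tfrac1c\bigr)\|(u,w)\|_X^2$ on $\mathcal N$ with $c=\min\{p+1,q+1\}$ (your $\theta$ equals this constant), then the Sobolev embeddings together with \eqref{3-5} and the Nehari constraint to force $\|(u,w)\|_X$ away from zero. The only cosmetic difference is that the paper extracts the lower bound $s_0$ as the positive root of $s^{p-1}+s^{q-1}=1/C$ after dividing by $\|(u,w)\|_X^2$, whereas you argue by the same smallness contradiction directly; both are valid.
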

\begin{proof}
Fix $(u,w) \in \mathcal{N}$. In view of \eqref{3-8} and \eqref{3-10}, we get
\begin{align}\label{3-11-1}
\mathcal{J}(u,w)\geq\left(\frac{1}{2}-\frac{1}{c}\right)(\|\nabla
u\|_2^2+|\Delta w|^2_2),
\end{align}
where $c:=\min\{p+1,q+1\}>2$. 
Since $p\leq 5$, it follows from \eqref{3-5}, \eqref{3-10} and embedding inequalities that
\begin{align}
\|\nabla u\|^2_2+|\Delta w|^2_2\leq
C_{p,q}(\|u\|^{p+1}_{p+1}+|w|^{q+1}_{q+1})  \leq C(\|\nabla
u\|^{p+1}_2+|\Delta w|^{q+1}_2),\nonumber
\end{align}
which gives us
\begin{align}\label{3-11-2}
\|(u,w)\|^2_X\leq C(\|(u,w)\|^{p+1}_X+\|(u,w)\|^{q+1}_X).
\end{align}
Noting $(u,w)\neq(0,0)$, we infer from \eqref{3-11-2} that
$$
\|(u,w)\|^{p-1}_X+\|(u,w)\|^{q-1}_X\geq \frac{1}{C}.
$$
Then, $\|(u,w)\|_X\geq s_0>0$, where $s_0$ is the unique positive
root of the equation $s^{p-1}+s^{q-1}=\frac{1}{C}$. It follows from
\eqref{3-11-1} that
$$
\mathcal{J}(u,w)\geq \left(\frac{1}{2}-\frac{1}{c}\right)s_0^2,\
\;\;\text{for all}\;\; (u,w)\in\mathcal{N},
$$
completing the proof.
\end{proof}

We remark that the potential well $\mathcal W$ and the Nehari manifold $\mathcal N$ are disjoint sets, because of (\ref{def-W}) and (\ref{depth}). 
That is, 
\begin{align}   \label{disjointWN}
\mathcal W \cap \mathcal N =   \emptyset.
\end{align}
In other words, if $(u,w) \in X \backslash \{(0,0)\}$ such that $$\|\nabla u\|^2_2+|\Delta w|^2_2=(p+1)\int_\Omega F(u)dx+(q+1)\int_\Gamma H(w)d\Gamma,$$
then $(u,w) \not \in \mathcal W$.

The potential well $\mathcal W$ can be decomposed into two parts: the ``stable" part $\mathcal W_1$ and the ``unstable" part $\mathcal W_2$:
\begin{align*}
\mathcal{W}_1&=\left\{(u,w)\in\mathcal{W}:\|\nabla u\|^2_2+|\Delta w|^2_2>(p+1)\int_\Omega F(u)dx+(q+1)\int_\Gamma H(w)d\Gamma\right\} \cup\{(0,0)\},\\
\mathcal{W}_2&=\left\{(u,w)\in\mathcal{W}:\|\nabla u\|^2_2+|\Delta
w|^2_2<(p+1)\int_\Omega F(u)dx+(q+1)\int_\Gamma H(w)d\Gamma\right\}.
\end{align*}
Clearly, $\mathcal{W}_1\cap\mathcal{W}_2=\emptyset$ and . 
Moreover, because of \eqref{disjointWN}, we see that 
$$\mathcal{W}_1\cup\mathcal{W}_2=\mathcal{W}.$$
In this paper, we show that if the initial data $(u_0,w_0)\in \mathcal W_1$ and the initial total energy $\mathcal E(0)<d$, then the weak solution of system \eqref{PDE} is global in time. Also, we prove the uniform decay rate of energy under additional assumptions. 
In a future paper, we will study the finite-time blow-up when the initial data resides in $\mathcal W_2$.

Using a similar argument as the proof of Lemma 2.7 in \cite{GR2}, the depth of the potential well $d$ coincides with the mountain pass level. 
In particular, 
\begin{align}\label{3-11}
d:=\inf_{(u,w)\in\mathcal{N}}\mathcal{J}(u,w)=\inf_{(u,w)\in
X\backslash\{(0,0)\}}\sup_{\lambda\geq0}\mathcal{J}(\lambda (u,w)),
\end{align}
if the assumptions of Lemma \ref{lem3-1} are valid. 
The minimax method and mountain pass theorem in the theory of calculus of variations can be found in the book by Rabinowitz \cite{Minimax}.

\vspace{0.1 in}

\subsection{A closed subset of $\mathcal{W}_1$}   \label{subsetW}

Here, we construct a closed subset of $\mathcal{W}_1$. 
If initial data belong to such a subset, we are able to estimate energy decay rates in Section \ref{sec-decay}.

Throughout, we assume $1<p\leq 5$ and $q>1$.  Also, Assumption \ref{ass} and Assumption \ref{ass1} hold.

Thanks to the Sobolev embedding $H^1_{\Gamma_0}(\Omega)  \hookrightarrow L^{p+1}(\Omega)$ for $1<p\leq 5$ and 
$H^2_0(\Gamma) \hookrightarrow L^{q+1}(\Gamma)$ for $q>1$, we can define the best embedding constants:
\begin{align}\label{3-12}
K_1:=\sup_{u\in
H^1_{\Gamma_0}(\Omega)\backslash\{0\}}\frac{\|u\|^{p+1}_{p+1}}{\|\nabla
u\|^{p+1}_2},\ \ K_2:=\sup_{w\in
H^2_0(\Gamma)\backslash\{0\}}\frac{|w|^{q+1}_{q+1}}{|\Delta
w|^{q+1}_2}.
\end{align}

It follows  from \eqref{3-5} and \eqref{3-12} that
\begin{align}\label{3-13}
\mathcal{J}(u,w)&\geq\frac{1}{2}(\|\nabla u\|^2_2+|\Delta
w|^2_2)-M(\|u\|^{p+1}_{p+1}+|w|^{q+1}_{q+1})\nonumber\\
&\geq \frac{1}{2}(\|\nabla u\|^2_2+|\Delta w|^2_2)-MK_1\|\nabla
u\|^{p+1}_2-MK_2|\Delta w|^{q+1}_2\nonumber\\
&\geq \frac{1}{2}\norm{(u,w)}_X^2 - MK_1\|(u,w)\|^{p+1}_X-MK_2\|(u,w)\|^{q+1}_X,
\end{align}
where $X=H^1_{\Gamma_0}(\Omega)\times H^2_0(\Gamma)$
and $\norm{(u,w)}_X  =   (\norm{\nabla u}_2^2 +  |\Delta w|_2^2)^{1/2}$.  

Inequality \eqref{3-13} can be expressed as:
\begin{align}\label{3-13-1}
\mathcal{J}(u,w)\geq\Lambda(\|(u,w)\|_X),   \;\;  \text{for any} \,\, (u,v) \in X,
\end{align}
where  the function $\Lambda(s)$ is given by:
\begin{align}   \label{Lambda}
\Lambda(s):=\frac{1}{2}s^2-MK_1s^{p+1}-MK_2s^{q+1}.
\end{align}

In view of $p,q>1$, then
$$
\Lambda'(s)=s[1-MK_1(p+1)s^{p-1}-MK_2(q+1)s^{q-1}],
$$
has only one positive zero at $s^*$, where $s^*$ satisfies
\begin{align}\label{3-13-2}
MK_1(p+1)(s^*)^{p-1}+MK_2(q+1)(s^*)^{q-1}=1.
\end{align}
It is easy to see that $\Lambda(s)$ has a maximum value at
$s^*$ on $[0,\infty)$, i.e., $\sup_{s\in[0,\infty)}\Lambda(s)=\Lambda(s^*)>0$.

 Now, define
\begin{align}   \label{tW}
\tilde{\mathcal{W}}_1:=\{(u,w)\in X:
\|(u,w)\|_X<s^*,\mathcal{J}(u,w)<\Lambda(s^*)\}.
\end{align}

We remark that $\tilde{\mathcal{W}}_1$ is not the trivial set $\{(0,0)\}$, since for any $(u,w)\in X$, there  is a small real number $c>0$ 
such that the scaler multiple $c(u,w)$ belong to  $\tilde{\mathcal{W}}_1$.

\begin{lem}\label{lem3-2}
$\tilde{\mathcal{W}}_1$ is a subset of $\mathcal{W}_1$.
\end{lem}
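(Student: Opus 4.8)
The plan is to show that every $(u,w)\in\tilde{\mathcal W}_1$ satisfies both defining conditions of $\mathcal W_1$: first that $(u,w)\in\mathcal W$, i.e.\ $\mathcal J(u,w)<d$; and second that either $(u,w)=(0,0)$ or the strict inequality $\|\nabla u\|_2^2+|\Delta w|_2^2>(p+1)\int_\Omega F(u)\,dx+(q+1)\int_\Gamma H(w)\,d\Gamma$ holds.

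For the first inclusion, I would relate $\Lambda(s^*)$ to the well depth $d$. Take any $(u,w)\in\mathcal N$; by \eqref{3-13-1} we have $\mathcal J(u,w)\ge\Lambda(\|(u,w)\|_X)$, and on the other hand from \eqref{3-11-1} together with the estimate $\|(u,w)\|_X\ge s_0$ used in the proof of Lemma \ref{lem3-1} one knows the Nehari-manifold norms are bounded below. The cleanest route is: on $\mathcal N$, \eqref{3-13} forces $\|(u,w)\|_X^2\le 2MK_1\|(u,w)\|_X^{p+1}+2MK_2\|(u,w)\|_X^{q+1}$ (since $\mathcal J\ge 0$ there by \eqref{3-11-1}), which by dividing by $\|(u,w)\|_X^2$ and comparing with \eqref{3-13-2} yields $\|(u,w)\|_X\ge s^*$. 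But $\Lambda$ is increasing on $[0,s^*]$ and $\sup_{[0,\infty)}\Lambda=\Lambda(s^*)$, so combined with $\Lambda$ being eventually negative this does not immediately give monotone control past $s^*$; instead I note that for $(u,w)\in\mathcal N$, $\mathcal J(u,w)=\left(\tfrac12-\tfrac1c\right)\|(u,w)\|_X^2$ is NOT quite right since $F,H$ need not be comparable, so I would instead directly use $\mathcal J(u,w)\ge\Lambda(\|(u,w)\|_X)$ and the fact that for $s\ge s^*$, $\Lambda(s)\ge$ ... — this is where care is needed. The honest argument: since $\mathcal N$ norms are $\ge s^*$ and $\mathcal J\ge\Lambda$ on all of $X$ while $\Lambda(s)$ for $s\ge s^*$ can dip below $\Lambda(s^*)$, one instead shows $d=\inf_{\mathcal N}\mathcal J\ge\Lambda(s^*)$ by a scaling argument: for fixed $(u,w)\ne(0,0)$, $\sup_{\lambda\ge 0}\mathcal J(\lambda(u,w))\ge\sup_{\lambda\ge0}\Lambda(\lambda\|(u,w)\|_X)=\Lambda(s^*)$, and by \eqref{3-11} this supremum-infimum equals $d$. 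Hence $d\ge\Lambda(s^*)$, so $\mathcal J(u,w)<\Lambda(s^*)\le d$ gives $(u,w)\in\mathcal W$.

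For the second condition, suppose $(u,w)\in\tilde{\mathcal W}_1$ with $(u,w)\ne(0,0)$ but, for contradiction, $\|\nabla u\|_2^2+|\Delta w|_2^2\le(p+1)\int_\Omega F(u)\,dx+(q+1)\int_\Gamma H(w)\,d\Gamma$. I would look at the ray $\lambda\mapsto\langle\mathcal J'(\lambda(u,w)),\lambda(u,w)\rangle=\lambda^2\|(u,w)\|_X^2-(p+1)\lambda^{p+1}\int_\Omega F(u)\,dx-(q+1)\lambda^{q+1}\int_\Gamma H(w)\,d\Gamma$; at $\lambda=1$ this is $\le0$ by assumption, and it is positive for small $\lambda>0$, so by the intermediate value theorem there is $\lambda^*\in(0,1]$ with $\lambda^*(u,w)\in\mathcal N$, whence $\|\lambda^*(u,w)\|_X\ge s^*$ (from the Nehari lower bound established above), i.e.\ $\|(u,w)\|_X\ge\lambda^*{}^{-1}s^*\ge s^*$, contradicting $\|(u,w)\|_X<s^*$. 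Therefore the reverse strict inequality holds and $(u,w)\in\mathcal W_1$.

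The main obstacle I anticipate is pinning down the inequality $d\ge\Lambda(s^*)$ cleanly — in particular making rigorous that $\inf_{\mathcal N}\mathcal J\ge\Lambda(s^*)$ either via the mountain-pass characterization \eqref{3-11} (which requires the hypotheses of Lemma \ref{lem3-1}, already assumed here) or via the direct lower bound $\|(u,w)\|_X\ge s^*$ on $\mathcal N$ followed by the observation that on $\mathcal N$ one has the sharper identity $\mathcal J(u,w)\ge\left(\tfrac12-\tfrac1c\right)\|(u,w)\|_X^2\ge\left(\tfrac12-\tfrac1c\right)(s^*)^2$ and checking this last quantity is $\ge\Lambda(s^*)$. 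Reconciling these two lower bounds for $d$ and choosing the one that matches $\Lambda(s^*)$ exactly is the delicate point; the scaling/mountain-pass route via \eqref{3-11} is the most robust and is what I would write up.
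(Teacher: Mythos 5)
Your argument is correct in substance, and it splits naturally into two halves. For the containment $\tilde{\mathcal W}_1\subset\mathcal W$ you settle on exactly the paper's route: $\sup_{\lambda\ge0}\mathcal J(\lambda(u,w))\ge\sup_{s\ge0}\Lambda(\lambda\|(u,w)\|_X)=\Lambda(s^*)$ plus the mountain-pass characterization \eqref{3-11} gives $\Lambda(s^*)\le d$, hence $\mathcal J(u,w)<\Lambda(s^*)\le d$. For the second defining condition of $\mathcal W_1$ you take a genuinely different route. The paper proves the strict inequality \emph{directly}: for any $(u,w)\neq(0,0)$ with $\|(u,w)\|_X<s^*$, chaining \eqref{3-5}, \eqref{3-12} and the defining equation \eqref{3-13-2} of $s^*$ yields
\begin{align*}
(p+1)\int_\Omega F(u)\,dx+(q+1)\int_\Gamma H(w)\,d\Gamma
&\le \|(u,w)\|^2_X\Bigl[(p+1)MK_1\|(u,w)\|^{p-1}_X+(q+1)MK_2\|(u,w)\|^{q-1}_X\Bigr]\\
&<\|(u,w)\|^2_X,
\end{align*}
which is \eqref{cW1} and finishes the lemma in one stroke. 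You instead argue by contradiction, projecting onto $\mathcal N$ along the ray $\lambda\mapsto\lambda(u,w)$ and invoking the auxiliary bound $\|(v,z)\|_X\ge s^*$ for all $(v,z)\in\mathcal N$. That detour is valid, but two remarks are in order. First, the auxiliary Nehari lower bound is itself proved by precisely the computation the paper uses directly (on $\mathcal N$ the identity \eqref{3-10} together with \eqref{3-5}--\eqref{3-12} gives $\|(v,z)\|^2_X\le(p+1)MK_1\|(v,z)\|^{p+1}_X+(q+1)MK_2\|(v,z)\|^{q+1}_X$; divide by $\|(v,z)\|^2_X$ and compare with \eqref{3-13-2}), so your argument is strictly longer while resting on the same estimate. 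Second, your stated justification of that bound --- ``since $\mathcal J\ge0$ on $\mathcal N$ by \eqref{3-11-1}'' --- is a non sequitur: \eqref{3-11-1} and \eqref{3-13-1} are both \emph{lower} bounds on $\mathcal J$ and cannot produce an upper bound on $\|(v,z)\|^2_X$; the bound must come from the Nehari identity itself. With that justification repaired as indicated, your proof goes through; what the paper's direct route buys is the avoidance of the Nehari projection and of any discussion of $\inf_{\mathcal N}\|\cdot\|_X$ altogether.
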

\begin{proof}
For $(u,w)\in X\backslash\{(0,0)\}$, we infer  from \eqref{3-13-1} that
$\mathcal{J}(\lambda (u,w))\geq \Lambda(\lambda\|(u,w)\|_X)$ for all
$\lambda\geq0$. Then we have
$$
\sup_{\lambda\geq0}\mathcal{J}(\lambda (u,w))
 \geq   \sup_{\lambda\geq0}   \Lambda(\lambda\|(u,w)\|_X)  = \sup_{s\in[0,\infty)}\Lambda(s) = \Lambda(s^*).
$$
 By
\eqref{3-11}, we have 
\begin{align}    \label{Ld}
\Lambda(s^*)\leq d.
\end{align}

Combining \eqref{3-5} and \eqref{3-12}, and by using \eqref{3-13-2}, we
obtain that for any $(u,w) \in X \backslash \{(0,0)\}$ with $\|(u,w)\|_X< s^*$,
\begin{align}   \label{cW1}
&(p+1)\int_\Omega F(u)dx+(q+1)\int_\Gamma H(w)d\Gamma\nonumber\\
&\quad\leq (p+1)MK_1\|\nabla u\|^{p+1}_2+(q+1)MK_2|\Delta
w|^{q+1}_2\nonumber\\
&\quad\leq
\|(u,w)\|^2_X \Big[(p+1)MK_1\|(u,w)\|^{p-1}_X+(q+1)MK_2\|(u,w)\|^{q-1}_X\Big]\nonumber\\
&\quad<\|(u,w)\|^2_X \Big[(p+1)MK_1(s^*)^{p-1}+(q+1)MK_2(s^*)^{q-1}\Big]\nonumber\\
&\quad=\|(u,w)\|^2_X=\|\nabla u\|^2_2+|\Delta w|^2_2.
\end{align}

Because of \eqref{tW}, \eqref{Ld} and \eqref{cW1}, we conclude that $\tilde{\mathcal{W}}_1\subset\mathcal{W}_1$.
\end{proof}

For each sufficiently small $\delta>0$, we define a closed subset of $\tilde{\mathcal{W}}_1$ by
\begin{align}\label{3-14}
\tilde{\mathcal{W}}^{\delta}_1:=\{(u,w)\in X:
\|(u,w)\|_X\leq s^*-\delta, \, \mathcal{J}(u,w)\leq\Lambda(s^*-\delta)\}.
\end{align}
$\tilde{\mathcal{W}}^{\delta}_1$ is a closed set because the space $X$ is complete and $\mathcal J$ is continuous from $X$ to $\mathbb R$.
Clearly,
\begin{align*}
\tilde{\mathcal{W}}^{\delta}_1  \subset \tilde{\mathcal{W}}_1 \subset \mathcal W_1.
\end{align*}
In Section \ref{sec-decay}, we shall show the energy decay by assuming the initial data come from the closed set $\tilde{\mathcal{W}}^{\delta}_1$.
Such a closed subset of $\mathcal W_1$ was also used in \cite{GR2} by Guo and Rammaha to show the decay of energy for a system of coupled nonlinear wave equations. But, in paper \cite{GR2} the closed set $\tilde{\mathcal{W}}^{\delta}_1$ was used in the lengthy compactness-uniqueness argument to absorb the lower-oder terms; whereas, in this manuscript we adjust the stabilization estimate by taking advantage of the closed set $\tilde{\mathcal{W}}^{\delta}_1$ so that the appearance of lower-order terms is prevented, and so our proof is concise.

\vspace{0.1 in}

\subsection{Total energy}
The kinetic energy of system \eqref{PDE} is given by $\frac{1}{2}(\|u_t(t)\|^2_2+|w_t(t)|^2_2)$. 
Also, the potential energy is given by $\mathcal{J}(u(t),w(t))$, where the nonlinear function $\mathcal J$ is defined in \eqref{3-8}.
Naturally, we define the total energy $\mathcal E(t)$ as the summation of the kinetic energy and the potential energy, namely, 
\begin{align}\label{3-9}
\mathcal{E}(t): &=   \frac{1}{2}(\|u_t(t)\|^2_2+|w_t(t)|^2_2)+\mathcal{J}(u(t),w(t))  \notag\\
&=  \frac{1}{2}\left(\|u_t(t)\|_2^2+|w_t(t)|_2^2   +\|\nabla u(t)\|_2^2   +|\Delta w(t)|_2^2\right)  -\int_\Omega F(u)dx-\int_\Gamma H(w) d\Gamma     \notag\\
&= E(t)   -\int_\Omega F(u)dx-\int_\Gamma H(w) d\Gamma,
\end{align}
where the quadratic energy $E(t)$ is defined in (\ref{qua}).

Using the notion of the total energy $\mathcal E(t)$, then the energy identity (\ref{energy}) can be rewritten in a simpler form. 
Indeed, since $\frac{d}{dt} F(u(t)) =  f(u(t)) u_t(t)$, we have $\int_0^t  f(u)u_t d\t = F(u(t)) - F(u_0)$.
Therefore, the energy identity (\ref{energy}) is equivalent to 
\begin{align} \label{3-15}
\mathcal{E}(t)+\int^t_0\int_\Omega
g_1(u_t)u_tdxd\tau+\int^t_0\int_\Gamma g_2(w_t)w_td\Gamma
d\tau=\mathcal{E}(0),   \;\;  \text{for all}   \,\, t\in [0,T),
\end{align}
where $T$ is the maximal existence time. Taking the derivative with respect to $t$ gives
\begin{align}   \label{3-15de}
\mathcal{E}'(t)   +   \int_\Omega
g_1(u_t (t))u_t(t) \,dx +  \int_\Gamma g_2(w_t(t))w_t(t) d\Gamma = 0,    \;\;  \text{for all}   \,\, t\in [0,T).
\end{align}

Since the frictional damping terms satisfy $g_1(s)s\geq 0$ and $g_2(s)s\geq 0$ for all $s\in \mathbb R$,
we see from (\ref{3-15de}) that
\begin{align}   \label{negE'}
\mathcal E'(t) \leq 0,    \;\;  \text{for all}   \,\, t\in [0,T).
\end{align}
Therefore, $\mathcal E(t)$ is non-increasing for all $t \in [0,T)$.

\vspace{0.1 in}

\subsection{Main Results}
Our first result is the global existence of potential well solutions, provided the initial data belong to the set $\mathcal W_1$, 
which stands for the stable part of the potential well.

\begin{thm} [{\bf Global existence of potential well solutions}]
\label{thm3-1}
Assume that Assumption \ref{ass} and Assumption \ref{ass1}
hold. Let $1<p\leq 5$ and $q>1$. Assume further
$(u_0,w_0)\in\mathcal{W}_1$ and the total energy $\mathcal{E}(0)<d$. Then, system
\eqref{PDE} admits a global weak solution $(u,w)$. In addition,  for
any $t\geq 0$, the potential energy $\mathcal{J}(u(t),w(t))$, the total energy $\mathcal E(t)$ and the quadratic energy $E(t)$ satisfy
\begin{align}
\begin{cases}
    (i)\  \mathcal{J}(u(t),w(t))\leq
\mathcal{E}(t)\leq\mathcal{E}(0)<d,  \\
    (ii)\
(u(t),w(t))\in\mathcal{W}_1,\\
    (iii)\  E(t) < \displaystyle\frac{cd}{c-2}, \\
    (iv)\ \displaystyle\frac{c-2}{c}E(t)\leq\mathcal{E}(t)\leq E(t),
 \end{cases}\nonumber
\end{align}
 where
$c=\min\{p+1,q+1\}>2$.
\end{thm}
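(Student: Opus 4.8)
The plan is to start from the local weak solution $(u,w)$ on $[0,T_0]$ guaranteed by Theorem \ref{t:1}, and to show that the hypotheses $(u_0,w_0)\in\mathcal W_1$ and $\mathcal E(0)<d$ propagate in time, which in turn yields a uniform a priori bound on the quadratic energy $E(t)$ and hence global existence by the usual continuation argument. The first step is to observe from the energy identity \eqref{3-15} and the sign condition $g_i(s)s\ge 0$ that $\mathcal E(t)\le\mathcal E(0)<d$ for all $t$ in the interval of existence; combined with $\mathcal J(u(t),w(t))=\mathcal E(t)-\frac12(\|u_t\|_2^2+|w_t|_2^2)\le\mathcal E(t)$, this gives item $(i)$.

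The heart of the matter is item $(ii)$: showing $(u(t),w(t))$ cannot leave $\mathcal W_1$. I would argue by contradiction using continuity in time. Since $(u_0,w_0)\in\mathcal W_1\subset\mathcal W$ and $t\mapsto(u(t),w(t))$ is continuous into $X$ while $\mathcal J$ is continuous on $X$, if the solution ever left $\mathcal W_1$ it would have to pass through $\mathcal W_2$, and by continuity of the map $t\mapsto \|\nabla u(t)\|_2^2+|\Delta w(t)|_2^2 - (p+1)\int_\Omega F(u(t))\,dx - (q+1)\int_\Gamma H(w(t))\,d\Gamma$ there would be a first time $t_0$ at which this quantity vanishes with $(u(t_0),w(t_0))\ne(0,0)$, i.e. $(u(t_0),w(t_0))\in\mathcal N$. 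But then $\mathcal J(u(t_0),w(t_0))\ge d$ by the definition \eqref{depth} of $d$, contradicting $(i)$. The only subtlety is ruling out $(u(t_0),w(t_0))=(0,0)$ at such a crossing; here one uses that on $\mathcal W_2$ one has, via \eqref{3-5} and the embedding constants, $\|(u,w)\|_X^2 < (p+1)\int_\Omega F(u) + (q+1)\int_\Gamma H(w) \le C(\|(u,w)\|_X^{p+1}+\|(u,w)\|_X^{q+1})$, forcing $\|(u,w)\|_X$ to be bounded below by a positive constant on $\mathcal W_2$, so the crossing point is bounded away from the origin and indeed lies on $\mathcal N$.

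Next, for item $(iii)$, I would use the characterization \eqref{3-10} of $\mathcal N$ together with the definition of $\mathcal W_1$: for $(u(t),w(t))\in\mathcal W_1$ one has $\|\nabla u\|_2^2+|\Delta w|_2^2 \ge (p+1)\int_\Omega F(u) + (q+1)\int_\Gamma H(w) \ge c\left(\int_\Omega F(u)+\int_\Gamma H(w)\right)$ with $c=\min\{p+1,q+1\}$ (using $F,H\ge 0$), hence $\int_\Omega F(u)+\int_\Gamma H(w)\le \frac1c(\|\nabla u\|_2^2+|\Delta w|_2^2)$. Plugging this into the definition \eqref{3-9} of $\mathcal E(t)$ gives
\begin{align*}
\mathcal E(t) &= \frac12\left(\|u_t\|_2^2+|w_t|_2^2+\|\nabla u\|_2^2+|\Delta w|_2^2\right) - \int_\Omega F(u) - \int_\Gamma H(w)\\
&\ge \frac12\left(\|u_t\|_2^2+|w_t|_2^2\right) + \left(\frac12-\frac1c\right)\left(\|\nabla u\|_2^2+|\Delta w|_2^2\right) \ge \frac{c-2}{c}\,E(t),
\end{align*}
which is the lower bound in item $(iv)$; the upper bound $\mathcal E(t)\le E(t)$ is immediate from \eqref{3-9} since $F,H\ge 0$. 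Finally, combining $\frac{c-2}{c}E(t)\le\mathcal E(t)\le\mathcal E(0)<d$ yields $E(t)<\frac{cd}{c-2}$, which is item $(iii)$. This uniform bound on $E(t)$, being independent of $T_0$, allows the local solution to be extended to $[0,\infty)$, establishing global existence.

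\textbf{Main obstacle.} I expect the delicate point to be the invariance argument for $\mathcal W_1$ in item $(ii)$ — specifically, making rigorous the ``first exit time'' argument and verifying that the solution cannot slip out of $\mathcal W_1$ through the origin, which requires the lower bound on $\|(u,w)\|_X$ over $\mathcal W_2$ (essentially the same estimate that appears in the proof of Lemma \ref{lem3-1}). Everything else reduces to the algebraic manipulations of the energy identity and the homogeneity relations \eqref{3-3}, which are routine.
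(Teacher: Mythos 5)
Your proposal follows essentially the same route as the paper's proof: monotonicity of $\mathcal E$ from the energy identity gives (i); invariance of $\mathcal W_1$ is obtained by an intermediate-value/Nehari-manifold contradiction, with the origin excluded via the lower bound $\|(u,w)\|_X\ge s_0>0$ on $\mathcal W_2$ (the same estimate as in Lemma \ref{lem3-1}); and (iii)--(iv) rest on the inequality $\int_\Omega F(u)\,dx+\int_\Gamma H(w)\,d\Gamma\le\frac1c\left(\|\nabla u\|_2^2+|\Delta w|_2^2\right)$ valid on $\mathcal W_1$. Your ordering --- proving (iv) first and then reading off (iii) from $\frac{c-2}{c}E(t)\le\mathcal E(t)\le\mathcal E(0)<d$ --- is an equivalent and slightly tidier route than the paper's, which substitutes the bound $\int_\Omega F+\int_\Gamma H<\frac{2d}{c-2}$ directly into the energy identity.

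One technical point needs fixing in the invariance argument. You should take the crossing time to be the \emph{last} zero of the Nehari functional before $t_1$, i.e.\ the supremum $t^*$ of all $s\in(0,t_1)$ satisfying \eqref{3-17}, not the first. Only with this choice is the trajectory guaranteed to lie in $\mathcal W_2$ on the whole interval $(t^*,t_1]$ immediately to the right of the crossing, which is what allows you to transport the lower bound $\|(u(t),w(t))\|_X>s_0$ to the limit $t\to t^{*+}$ and conclude $(u(t^*),w(t^*))\ne(0,0)$, hence $(u(t^*),w(t^*))\in\mathcal N$ and $\mathcal J(u(t^*),w(t^*))\ge d$. At a \emph{first} crossing the trajectory may merely touch the Nehari set and return to $\mathcal W_1$, and the points approaching such a crossing from the left lie in $\mathcal W_1$, where no lower bound on $\|(u,w)\|_X$ is available; so the claim that ``the crossing point is bounded away from the origin'' does not follow as stated. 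Finally, the continuity of $t\mapsto\int_\Omega F(u(t))\,dx$ and $t\mapsto\int_\Gamma H(w(t))\,d\Gamma$, which you invoke implicitly, should be justified from the growth bounds \eqref{3-6}, the embeddings (using $p\le5$), and $u\in C([0,T);H^1_{\Gamma_0}(\Omega))$, $w\in C([0,T);H^2_0(\Gamma))$, exactly as in the paper's estimate \eqref{Ftt0}.
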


\begin{rmk}
There are two global existence results: in Theorem \ref{t:1}, the weak solution is global provided the damping terms dominate the source terms, i.e, 
$m\geq p$ and $r\geq q$; whereas, in Theorem \ref{thm3-1}, the existence of global solutions is assured by the assumption that the initial data belong to $\mathcal W_1$ and initial energy is sufficiently small.  
\end{rmk}

The following theorem establishes  the uniform decay rates of energy.

\begin{thm} [{\bf Energy decay rates}]\label{thm4-1}
Assume that Assumption \ref{ass} and Assumption \ref{ass1}
hold. Further assume that $1<p<5$, $k>1$, and $u_0\in L^{m+1}(\Omega)$. Also assume $(u_0,w_0)\in\tilde{\mathcal{W}}^{\delta}_1$ and $\mathcal{E}(0)\leq\Lambda(s^*-\delta)$,
for a sufficiently small $\delta>0$.
Moreover, assume $u\in
L^\infty(\mathbb{R}^+;L^{\frac{3}{2}(m-1)}(\Omega))$ if $m>5$.  Then
the global solution of problem \eqref{PDE} furnished by Theorem \ref{thm3-1}
has the following decay rates.

(i) If $g_1$ and $g_2$ are linearly bounded near the origin, then
the total energy $\mathcal{E}(t)$ and the quadratic energy $E(t)$ decay to zero exponentially, namely, for any $t\geq0$,
\begin{align}\label{exp}
\frac{c-2}{c}E(t)\leq\mathcal{E}(t)\leq \frac{C\mathcal{E}(0)}{e^{at}},
\end{align}
where $C$ and $a$ are positive constants independent of initial data.\\

(ii) If at least one of $g_1$ and $g_2$ are not linearly bounded near
the origin, then the total energy $\mathcal{E}(t)$ and the quadratic
energy $E(t)$ decay algebraically,
\begin{align}\label{alg}
\frac{c-2}{c}E(t)\leq\mathcal{E}(t)\leq \frac{C(\mathcal E(0))}{(1+t)^{b}},
\end{align}
where $b$ is given by \eqref{4-10}.
\end{thm}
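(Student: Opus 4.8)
\textbf{Plan of proof for Theorem \ref{thm4-1} (energy decay rates).}
The overall strategy is the classical nonlinear semigroup / perturbed-energy method for stabilization: derive a single \emph{stabilization estimate} of the form
\begin{align*}
E(T) \leq C\!\left( \int_0^T\!\!\int_\Omega g_1(u_t)u_t\,dx\,d\tau + \int_0^T\!\!\int_\Gamma g_2(w_t)w_t\,d\Gamma\,d\tau \right)
\end{align*}
valid for $T$ sufficiently large, with \emph{no} lower-order terms on the right (this is the claimed novelty, established in Lemma \ref{lem4-1}, which I assume). Once this is in hand, one combines it with the energy identity \eqref{3-15}, which says the integral on the right equals $\mathcal E(0)-\mathcal E(T)$, to obtain $E(T)\leq C(\mathcal E(0)-\mathcal E(T))$; then invoke the equivalence $\frac{c-2}{c}E(t)\leq\mathcal E(t)\leq E(t)$ from Theorem \ref{thm3-1}(iv) (which applies since $(u_0,w_0)\in\tilde{\mathcal W}_1^\delta\subset\mathcal W_1$ and $\mathcal E(0)\leq\Lambda(s^*-\delta)\leq d$) to convert this into a recursive inequality for $\mathcal E$ over windows of length $T$.

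\textbf{Step 1 (the stabilization estimate).} Multiply the wave equation by $u$ and $u_t$ and the plate equation by $w$ and $w_t$ (equivalently, use the equipartition-of-energy multipliers in the weak formulation \eqref{wkslnwave}--\eqref{wkslnplt}), integrate over $\Omega\times(0,T)$ and $\Gamma\times(0,T)$, and carefully handle the coupling term $u_t|_\Gamma$ by passing the time derivative onto test functions as explained after Definition \ref{def:weaksln}. Using \eqref{3-10} and the fact that $(u(t),w(t))\in\tilde{\mathcal W}_1^\delta$ (which is invariant under the flow by the energy inequality \eqref{negE'} together with $\mathcal E(0)\leq\Lambda(s^*-\delta)$ — this needs a short argument mimicking Lemma \ref{lem3-2} and a continuity-in-time argument to rule out crossing the boundary $\|(u,w)\|_X=s^*-\delta$), one absorbs the source contributions: on $\tilde{\mathcal W}_1^\delta$ the computation \eqref{cW1} gives a \emph{strict} gap, $(p+1)\int_\Omega F(u)+(q+1)\int_\Gamma H(w)\leq \theta\,\|(u,w)\|_X^2$ with $\theta<1$, so the "good" quadratic term dominates and no lower-order term $\|u\|_2^2$, $|w|_2^2$ survives. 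The damping terms are controlled by splitting $\Omega$ (and $\Gamma$) into the region where $|u_t|\leq 1$ and where $|u_t|>1$, using the growth bounds in Assumption \ref{ass}; here the hypothesis $u_0\in L^{m+1}(\Omega)$ (propagated to $u\in L^\infty(0,T;L^{m+1}(\Omega))$) and, when $m>5$, the extra hypothesis $u\in L^\infty(\mathbb R^+;L^{\frac32(m-1)}(\Omega))$ are exactly what is needed to bound $\int\int |g_1(u_t)||u|$ and similar terms by the dissipation — this is the "new idea" referenced in subsection \ref{est-I2}.

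\textbf{Step 2 (from stabilization estimate to decay).} The inequality $\mathcal E(T)\leq C\big(\mathcal E(0)-\mathcal E(T)\big)$, i.e. $\mathcal E(T)\leq \frac{C}{C+1}\mathcal E(0)$, applied on each interval $[mT,(m+1)T]$ together with monotonicity of $\mathcal E$, yields $\mathcal E(mT)\leq \big(\frac{C}{C+1}\big)^m\mathcal E(0)$, hence exponential decay when $g_1,g_2$ are linearly bounded near zero. When the damping is not linearly bounded near the origin one instead obtains, via Jensen's inequality on the sublevel set where $|u_t|$ is small, a nonlinear recursion of the form $\mathcal E((m+1)T)\leq \Phi(\mathcal E((m+1)T)) + C(\mathcal E(0))$ that, by the standard Lasiecka--Tataru-type ODE comparison lemma (Komornik / Lasiecka), gives the algebraic rate $(1+t)^{-b}$ with $b$ determined by the polynomial growth exponents of $g_1$ and $g_2$ near $0$, which is the $b$ of \eqref{4-10}. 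The conversion $E\le\frac{c}{c-2}\mathcal E$ then transfers the rate to $E(t)$, giving both \eqref{exp} and \eqref{alg}.

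\textbf{Main obstacle.} The delicate point is Step 1: obtaining the stabilization estimate \emph{without} lower-order terms. The usual route produces terms like $\|u\|_{H^{1-\epsilon}}^2$ or $\|u\|_2^2$ from the interpolation \eqref{inter} and from estimating the source/coupling terms, and these are then removed by a compactness-uniqueness argument requiring unique continuation. Avoiding this hinges on (a) exploiting the strict coercivity gap $\theta<1$ on $\tilde{\mathcal W}_1^\delta$ so that the nonlinear potential is genuinely subordinate to $\|(u,w)\|_X^2$ rather than merely bounded by it up to lower-order corrections, and (b) a careful treatment of the boundary coupling term $\int_0^T(w_t,\gamma u)_\Gamma$ and of the supercritical source $f(u)$ (where $p$ can exceed $3$), using the trace embedding $H^1_{\Gamma_0}(\Omega)\tinto H^{1/2}(\Gamma)$ and the parameter restriction $p\frac{m+1}{m}<6$ to close the estimates purely in terms of the dissipation and the $X$-norm. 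Getting every constant to land on the "good" side so that the $E(T)$ term can be moved to the left is where the real work lies.
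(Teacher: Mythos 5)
Your proposal follows essentially the same route as the paper: the stabilization estimate of Lemma \ref{lem4-1} with no lower-order terms (obtained by exploiting the strict gap $\xi<1$ on the flow-invariant set $\tilde{\mathcal W}_1^\delta$), combined with the energy identity and the Lasiecka--Tataru iteration on $[mT,(m+1)T]$ with the associated ODE comparison. The one imprecision is that the general form of the stabilization estimate must be $\mathcal E(T)\le\tilde C\,\Phi(D(T))$ with $\Phi$ concave and reflecting the behavior of $g_1,g_2$ near the origin --- the linear bound $E(T)\le C\,D(T)$ as you display it holds only when both dampings are linearly bounded there --- but since you reinstate the concave (Jensen) correction when treating case (ii), the argument closes as in the paper.
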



\section{Global existence of potential well solutions}
This section is devoted to proving the global existence of potential well solutions if the initial data are in the stable set $\mathcal{W}_1$. 
In particular, we justify Theorem \ref{thm3-1} using the following argument.
\begin{proof}[Proof of Theorem \ref{thm3-1}]
Let the initial data $(u_0,w_0)\in\mathcal{W}_1$ and $\mathcal{E}(0)<d$.
The local well-posedness of a weak solution $(u(t),w(t))$ on $[0,T)$ is guaranteed by Theorem \ref{t:1} from paper \cite{Becklin-Rammaha2}, where $[0,T)$ is the maximal interval of existence.

We first show $(u(t),w(t))\in\mathcal{W}_1$ for all $t\in[0,T)$, namely, if the initial data belong to $\mathcal W_1$, 
the solution trajectory $(u(t),w(t))$ belongs to $\mathcal W_1$ during the entire life span of the solution. 

Because of \eqref{negE'}, we know $\mathcal E(t)$ is non-increasing as long as the solution exists. 
Therefore, $\mathcal{E}(t)\leq\mathcal{E}(0)<d$ since we assume the initial total energy is less than the depth $d$ of the potential well.
Also, the potential energy $\mathcal J(u(t),w(t))$ is not larger than the total energy $\mathcal E(t)$. This gives us that for any $t\in [0,T)$,
\begin{align}\label{3-16}
\mathcal{J}(u(t),w(t))\leq\mathcal{E}(t)\leq\mathcal{E}(0)<d.
\end{align}
Then part (i) is proved, and we obtain that $(u(t),w(t))\in\mathcal{W}$ for all $t\in [0,T)$.

To prove $(u(t),w(t))\in\mathcal{W}_1$ for all $t\in [0,T)$, we argue by contradiction.
We assume that there exists a time $t_1\in(0,T)$ such that
$(u(t_1),w(t_1))\notin\mathcal{W}_1$. 
But $(u(t_1),w(t_1)) \in \mathcal{W}$.
BY recalling
$\mathcal{W}_1\cup\mathcal{W}_2=\mathcal{W}$ and
$\mathcal{W}_1\cap\mathcal{W}_2=\emptyset$, then it must be the case
$(u(t_1),w(t_1))\in\mathcal{W}_2$.

Recall $F'(\xi) = f(\xi)$ and $|f(\xi)|\leq (p+1) M |\xi|^p$ for any $\xi \in \mathbb R$ due to \eqref{3-6}. Now, for any $t$, $t_0\in [0,T)$, then by the mean value theorem, one has,
\begin{align}   \label{Ftt0}
\int_\Omega|F(u(t))-F(u(t_0))|dx&\leq C\int_\Omega
(|u(t)|^p+|u(t_0)|^p)|u(t)-u(t_0)|dx \nonumber\\
&\leq C(\|u(t)\|^p_{p+1}+\|u(t_0)\|^p_{p+1})\|u(t)-u(t_0)\|_{p+1}   \notag\\
&\leq  C(\|u(t)\|^p_{1,\Omega}+\|u(t_0)\|^p_{1,\Omega})\|u(t)-u(t_0)\|_{1,\Omega},
\end{align}
where we have used the assumption that $p\leq 5$ and the embedding
$H^1_{\Gamma_0}(\Omega)\hookrightarrow L^6(\Omega)$. 
By the continuity of $u(t)$, namely, $u\in
C([0,T);H^1_{\Gamma_0}(\Omega))$, we can let $t$ approach $t_0$ in \eqref{Ftt0} to conclude that
$$
\int_\Omega F(u(t))\to \int_\Omega F(u(t_0)),\ \ \mbox{as}\ \ t\to
t_0,
$$
which implies that the function $t\mapsto\int_\Omega F(u(t))dx$ is
continuous on $[0,T)$.

Similarly, we can show that the function $t\mapsto\int_\Gamma
H(w(t))d\Gamma$ is continuous on $[0,T)$, by using $w \in C([0,T);H^2_0(\Gamma))$.

Therefore,  the mapping  
\begin{align}   \label{contt}
t \mapsto    \|\nabla u(t)\|^2_2+|\Delta w(t)|^2_2-(p+1)\int_\Omega
F(u(t))dx-(q+1)\int_\Gamma H(w(t))d\Gamma
\end{align}
is continuous.

In view of $(u(0),w(0))\in\mathcal{W}_1$ and
$(u(t_1),w(t_1))\in\mathcal{W}_2$ as well as the continuity of the function in (\ref{contt}), 
 the intermediate value theorem asserts that there exists a time $s\in (0,t_1)$ such that
\begin{align}\label{3-17}
\|\nabla u(s)\|^2_2+|\Delta w(s)|^2_2=(p+1)\int_\Omega
F(u(s))dx+(q+1)\int_\Gamma H(w(s))d\Gamma.
\end{align}
Define $t^*$ be the supremum of all $s\in(0,t_1)$ satisfying
\eqref{3-17}. 
Because of the continuity of the function in (\ref{contt}), we see that
$t^*\in(0,t_1)$ satisfying \eqref{3-17}, and $(u(t),w(t))\in\mathcal{W}_2$
for
any $t\in (t^*,t_1]$. We consider two cases:

\vspace{0.1 in}

\emph{Case 1.}  $(u(t^*),w(t^*))\neq(0,0)$. Since \eqref{3-17} holds
for $t^*$, then $(u(t^*),w(t^*))\in\mathcal{N}$, by the definition of the Nehari manifold $\mathcal N$  in (\ref{3-10}).
Then, we obtain from \eqref{depth} that $\mathcal{J}(u(t^*),w(t^*))\geq d$,  contradicting \eqref{3-16}.

\vspace{0.1 in}

\emph{Case 2.} $(u(t^*),w(t^*))=(0,0)$. Note that
$(u(t),w(t))\in\mathcal{W}_2$ for any $t\in(t^*,t_1]$. We conclude
from the definition of the set $\mathcal W_2$ and \eqref{3-5} that for any $t\in(t^*,t_1]$,
\begin{align}
\|\nabla u(t)\|^2_2+|\Delta w(t)|^2_2 <
C(\|u(t)\|^{p+1}_{p+1}+|w(t)|^{q+1}_{q+1})\leq C(\|\nabla
u(t)\|^{p+1}_2+|\Delta w(t)|^{q+1}_2),\nonumber
\end{align}
because $p\leq 5$. This implies that
\begin{align}   \label{uwX}
\|(u(t),w(t))\|^2_X<C(\|(u(t),w(t))\|^{p+1}_X+\|(u(t),w(t))\|^{q+1}_X),  \;\;  \text{for all}  \,\, t\in (t^*,t_1].
\end{align}
Since $(0,0)$ does not belong to $\mathcal W_2$, then $(u(t),w(t)) \not=(0,0)$ for any $t\in (t^*,t_1]$.
Then, we can divide both sides of (\ref{uwX}) by $\|(u(t),w(t))\|^2_X$ to obtain
$$
\|(u(t),w(t))\|^{p-1}_X+\|(u(t),w(t))\|^{q-1}_X>\frac{1}{C}.
$$
This yields  $\|(u(t),w(t))\|_X>s_0$, for any $t\in (t^*,t_1]$,
where $s_0>0$ is the unique positive solution of
$s^{p-1}+s^{q-1}=\frac{1}{C}$, where $p$,   $q>1$. 
Since the weak solution $(u(t),w(t))$ is continuous from $[0,T)$ to $X$, one has
$\|(u(t^*),w(t^*))\|_X\geq s_0>0$. This contradicts  the assumption
$(u(t^*),w(t^*))=(0,0)$. Therefore $(u(t),w(t))\in \mathcal{W}_1$ for all $t\in [0,T)$. Hence, claim (ii) is proved.

In the following, we prove that the weak solution $(u(t),w(t))$ on $[0,T)$ is global in time, i.e., the maximum lifespan $T=\infty$.  
To this end, we need to prove that the quadratic energy $E(t)$ has a uniform bound independent of time.

By \eqref{3-16}, the potential energy $\mathcal{J}(u(t),w(t))<d$ for all $t\in[0,T)$,
i.e.,
\begin{align}\label{3-18}
d>\mathcal{J}((u(t),w(t)))>\frac{1}{2}(\|\nabla u(t)\|^2_2+|\Delta
w(t)|^2_2)-\int_\Omega F(u(t))dx-\int_\Gamma H(w(t))d\Gamma.
\end{align}
Since we have proved that $(u(t),w(t))\in\mathcal{W}_1$ for all $t\in[0,T)$, then
\begin{align}\label{3-18-1}
\int_\Omega F(u(t))dx+\int_\Gamma H(w(t))d\Gamma\leq \frac{1}{c}(\|\nabla
u(t)\|^2_2+|\Delta w(t)|^2_2),
\end{align}
where $c=\min\{p+1,q+1\}>2$. It follows from \eqref{3-18} and
\eqref{3-18-1} that for any $t\in [0,T)$,
\begin{align}\label{3-19}
\int_\Omega F(u(t))dx+\int_\Gamma H(w(t))d\Gamma<\frac{2d}{c-2}.
\end{align}
Substituting \eqref{3-19} into \eqref{3-15}, we get that for any
$t\in[0,T),$
\begin{align}
&E(t)+\int^t_0\int_\Omega g_1(u_t)u_tdxd\tau+\int^t_0\int_\Gamma
g_2(w_t)w_t d\Gamma d\tau\nonumber\\
&\quad=\mathcal{E}(0)+\int_\Omega
F(u(t))dx+\int_\Gamma H(w(t))d\Gamma\nonumber\\
&\quad<d+\frac{2d}{c-2}=\frac{cd}{c-2}.\nonumber
\end{align}
Since $g_1(s)s\geq 0$ and $g_2(s)s\geq 0$ for all $s\in \mathbb R$, we obtain
\begin{align}   \label{ubE}
E(t) <   \frac{cd}{c-2}, \;\;\text{for all}\,\, t\in [0,T),
\end{align}
proving claim (iii).

Recall Theorem \ref{t:1} states that the local well-posedness of weak solutions defined on $[0,T_0]$ where $T_0$ 
depends on the initial quadratic energy $E(0)$. Due to (\ref{ubE}), $E(T_0)$ and $E(0)$ have the same upper bound. 
So we can extend the local solution from the time $T_0$ to $2T_0$. 
By iterating this procedure, one can obtain a global weak solution defined on $[0,\infty)$. That is to say, the maximum lifespan $T=\infty$.

To show (iv), we observe from \eqref{3-9} that $\mathcal{E}(t) \leq E(t)$ for all
$t\in[0,\infty)$ since $F$ and $H$ are nonnegative functions. Also, by (\ref{3-9}) and (\ref{3-18-1}), we obtain
$$
\mathcal{E}(t)\geq
\frac{1}{2}(\|u_t\|^2_2+|w_t|^2_2)+\left(\frac{1}{2}-\frac{1}{c}\right)(\|\nabla
u\|^2_2+|\Delta w|^2_2)\geq \frac{c-2}{c}E(t).
$$
The proof for Theorem \ref{thm3-1} is complete.
\end{proof}

\vspace{0.1 in}

\begin{rmk}
The continuity of the solution $(u,w)$ mapping from $[0,T)$ to $X=H^1_{\Gamma_0}(\Omega)\times H^2_0(\Gamma)$ is critical for the argument above, and essential for the validity of the entire paper. For instance, the implementation of the intermediate value theorem in the above proof depends on the continuity of the solution. Such regularity (namely, $u\in C([0,T);H^1_{\G_0}(\O))$ 
and $w\in C([0,T);H^2(\G))$) is guaranteed by the local well-posedness result (Theorem \ref{t:1}), proved by Becklin and Rammaha in \cite{Becklin-Rammaha2}. The method to prove the local existence for system \eqref{PDE} in \cite{Becklin-Rammaha2} consists of the theory of monotone operators and nonlinear semi-groups. Specifically, using Kato's Theorem (see, e.g. \cite{Sh}), the system has a solution $(u,w) \in W^{1,\infty}(0,T;X)$ if the source terms are subcritical. Then, the extension to supercritical source terms concludes that $(u,w) \in C([0,T);X)$. On the other hand, in another paper \cite{Becklin-Rammaha1}, Becklin and Rammaha studied a related model with restoring source terms but no damping terms, and by using Galerkin method, the local existence of weak solutions was shown but the solutions have only weak continuity in time. 
\end{rmk}

\vspace{0.1 in}

We end this section by giving a proof of this following lemma, which states that if the initial data belong to 
$\tilde{\mathcal{W}}^{\delta}_1$ (the closed subset of $\mathcal W_1$ constructed in subsection \ref{subsetW})
and if the initial total energy is sufficiently small, then the solution remains in $\tilde{\mathcal{W}}^{\delta}_1$ for all time. 

Recall the function $\Lambda(s)$ is defined in \eqref{Lambda} and $s^*$ is the location of the maximum of $\Lambda(s)$ on $\mathbb R^+$.
Also, recall the set $\tilde{\mathcal{W}}^{\delta}_1$ is defined in \eqref{3-14}.

\begin{lem}\label{lem3-3}
Suppose Assumption \ref{ass} and Assumption \ref{ass1} are valid. Let $1<p\leq 5$, $q>1$, and  $\delta>0$ is sufficiently small. Assume 
$\mathcal{E}(0)\leq\Lambda(s^*-\delta)$ and $(u_0,w_0)\in\tilde{\mathcal{W}}^{\delta}_1$. 
Then system \eqref{PDE} admits a global solution
$(u,v)$ satisfing $(u(t),w(t))\in\tilde{\mathcal{W}}^{\delta}_1$ for all $t\geq0$.

\end{lem}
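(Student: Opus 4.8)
The plan is to combine the global existence theorem (Theorem \ref{thm3-1}) with the monotonicity of the total energy (\ref{negE'}) and a continuity/barrier argument in the spirit of the proof of claim (ii) above. First I would note that since $\tilde{\mathcal{W}}^{\delta}_1 \subset \tilde{\mathcal{W}}_1 \subset \mathcal{W}_1$ and $\Lambda(s^*-\delta) \leq \Lambda(s^*) \leq d$, the hypotheses of Theorem \ref{thm3-1} are satisfied, so a global weak solution $(u,w)$ exists, it stays in $\mathcal{W}_1$ for all $t\geq 0$, and $\mathcal{E}(t) \leq \mathcal{E}(0) \leq \Lambda(s^*-\delta)$ for all $t\geq 0$ by (\ref{negE'}). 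This immediately gives the energy half of the membership condition in (\ref{3-14}), since $\mathcal{J}(u(t),w(t)) \leq \mathcal{E}(t) \leq \Lambda(s^*-\delta)$.

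It remains to show the norm bound $\|(u(t),w(t))\|_X \leq s^*-\delta$ for all $t\geq 0$. The initial datum satisfies $\|(u_0,w_0)\|_X \leq s^*-\delta$. I would argue by contradiction: suppose there is a first time $t_1>0$ (or just some time) at which $\|(u(t_1),w(t_1))\|_X > s^*-\delta$; by the continuity of $t\mapsto \|(u(t),w(t))\|_X$ (which follows from $u\in C([0,T);H^1_{\Gamma_0}(\Omega))$ and $w\in C([0,T);H^2_0(\Gamma))$, exactly the regularity emphasized in the remark above) and the intermediate value theorem, there is a time $t_2$ with $\|(u(t_2),w(t_2))\|_X = s^*-\delta$, and we may take $t_2$ so that $s^*-\delta < \|(u(t),w(t))\|_X < s^*$ on an interval to the right of $t_2$, or simply work at $t_2$ itself. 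At $t_2$, inequality (\ref{3-13-1}) gives $\mathcal{J}(u(t_2),w(t_2)) \geq \Lambda(\|(u(t_2),w(t_2))\|_X) = \Lambda(s^*-\delta)$. On the other hand $\mathcal{J}(u(t_2),w(t_2)) \leq \mathcal{E}(t_2) \leq \mathcal{E}(0) \leq \Lambda(s^*-\delta)$, forcing equality throughout; in particular $\mathcal{J}(u(t_2),w(t_2)) = \Lambda(s^*-\delta)$ and $\|(u_t(t_2),w_t(t_2))\|$ kinetic energy vanishes. To derive a genuine contradiction I would instead track the function $\phi(t) := \|(u(t),w(t))\|_X$ together with the strict inequality $\mathcal{E}(t) < d$ and push past $s^*$: since the solution starts in $\tilde{\mathcal{W}}_1$ (strict inequalities) and $\mathcal{E}(t)$ is nonincreasing while $\Lambda$ is strictly increasing on $[0,s^*)$, the trajectory can never reach $\|(u(t),w(t))\|_X = s^*$, because that would force $\mathcal{J} \geq \Lambda(s^*) \geq d > \mathcal{E}(t)$, impossible. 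Hence $\|(u(t),w(t))\|_X < s^*$ for all $t$; combined with the continuity and the fact that it starts $\leq s^*-\delta$, if it ever exceeded $s^*-\delta$ it would have to cross that level, at which point the same estimate $\mathcal{J} \geq \Lambda(s^*-\delta) \geq \mathcal{E}(0) \geq \mathcal{E}(t)$ forces $\mathcal{J}(u(t),w(t)) = \mathcal{E}(t)$, i.e. the kinetic energy is zero — and I would rule this out by the strictness built into the choice of a slightly smaller $\delta'$, or by observing that crossing upward requires $\phi'$ to have a sign that is incompatible with $\mathcal{E}$ being nonincreasing at a level set where $\Lambda' > 0$.

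The cleanest way to close the argument, which I would adopt, is the following barrier formulation: define $G(t) = \|(u(t),w(t))\|_X$; it is continuous, $G(0) \leq s^*-\delta$, and I claim $G(t) \notin [s^*-\delta, s^*]$ can be replaced by $G(t) \leq s^*-\delta$ for all $t$. Indeed, on the set where $G(t) \in [s^*-\delta, s^*)$ one would have, via (\ref{3-13-1}) and monotonicity of $\Lambda$ on $[0,s^*)$, that $\Lambda(s^*-\delta) \leq \Lambda(G(t)) \leq \mathcal{J}(u(t),w(t)) \leq \mathcal{E}(t) \leq \mathcal{E}(0) \leq \Lambda(s^*-\delta)$, so all inequalities are equalities; in particular $\mathcal{E}(t) = \mathcal{E}(0)$ on that set, which by (\ref{3-15de}) forces $g_1(u_t)u_t \equiv 0$ and $g_2(w_t)w_t \equiv 0$ there, and moreover $\mathcal{J}(u(t),w(t)) = \mathcal{E}(t)$ forces $\|u_t(t)\|_2 = |w_t(t)|_2 = 0$. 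Then I would feed this back: if the velocities vanish on a nondegenerate time interval and $\mathcal{E}$ is constant there, a short argument using the equations (or simply continuity of $G$ together with $G$ being locally constant there, since $u_t = w_t = 0$) shows $G$ cannot actually move from $s^*-\delta$ up to anything larger — it is pinned. Therefore $G(t) \leq s^*-\delta$ persists, and together with the energy bound already established we conclude $(u(t),w(t)) \in \tilde{\mathcal{W}}^{\delta}_1$ for all $t\geq 0$.

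The main obstacle I anticipate is making the "barrier cannot be crossed" step fully rigorous when the excess energy at the barrier is exactly zero (the degenerate equality case), since then $\mathcal{J}$ alone does not strictly separate $\tilde{\mathcal{W}}^{\delta}_1$ from its complement. I expect the authors handle this either by a slightly more careful choice of $\delta$ (replacing $\delta$ by a marginally smaller value so that $\mathcal{E}(0) < \Lambda(s^*-\delta)$ strictly, which removes the degenerate case entirely and makes the contradiction immediate), or by invoking the continuity of $G(t)$ in $X$ to observe that $\{t : G(t) \leq s^*-\delta\}$ is both open and closed in $[0,\infty)$, hence all of $[0,\infty)$. The rest — existence, the energy inequality, and the implication $\mathcal{J} \leq \mathcal{E}$ — is immediate from Theorem \ref{thm3-1} and (\ref{3-9}).
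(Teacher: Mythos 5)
Your first half is exactly right and matches the paper: $\tilde{\mathcal{W}}^{\delta}_1\subset\mathcal{W}_1$ and $\mathcal{E}(0)\leq\Lambda(s^*-\delta)<\Lambda(s^*)\leq d$ (note the first inequality is strict because $\Lambda$ is strictly increasing on $(0,s^*)$), so Theorem \ref{thm3-1} gives a global solution with $\mathcal{J}(u(t),w(t))\leq\mathcal{E}(t)\leq\mathcal{E}(0)\leq\Lambda(s^*-\delta)$. The problem is the second half. The ``degenerate equality case'' you flag as the main obstacle is a phantom, and it arises only because you insist on evaluating the inequalities at the crossing time $t_2$ where $\|(u,w)\|_X=s^*-\delta$ exactly. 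That level is \emph{inside} the closed set \eqref{3-14}, so there is nothing to rule out there. The correct move — and the paper's entire argument — is to evaluate at a time where the norm is \emph{strictly} between $s^*-\delta$ and $s^*$. If $\|(u(t),w(t))\|_X$ ever exceeds $s^*-\delta$, then since it starts at or below $s^*-\delta$ and $t\mapsto\|(u(t),w(t))\|_X$ is continuous, the intermediate value theorem produces a time $t_1$ with $\|(u(t_1),w(t_1))\|_X=s^*-\delta+\delta_0$ for some $\delta_0\in(0,\delta)$. At that time \eqref{3-13-1} and the strict monotonicity of $\Lambda$ on $(0,s^*)$ give
$$
\mathcal{J}(u(t_1),w(t_1))\geq\Lambda(s^*-\delta+\delta_0)>\Lambda(s^*-\delta)\geq\mathcal{E}(0)\geq\mathcal{E}(t_1)\geq\mathcal{J}(u(t_1),w(t_1)),
$$
a strict contradiction with no equality case to analyze. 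Your own chain of inequalities on the set where $G(t)\in[s^*-\delta,s^*)$ already contains this: ``all inequalities are equalities'' forces $\Lambda(G(t))=\Lambda(s^*-\delta)$, hence $G(t)=s^*-\delta$ by strict monotonicity, so $G$ never enters $(s^*-\delta,s^*)$ and you are done without any dynamical input.

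Two of your proposed patches are also incorrect as stated. First, you write that reaching $\|(u,w)\|_X=s^*$ would force $\mathcal{J}\geq\Lambda(s^*)\geq d$; the paper proves the opposite inequality $\Lambda(s^*)\leq d$ in \eqref{Ld}, so this step has no basis (and is unnecessary, since the trajectory is blocked already at the lower level). Second, the ``pinning'' argument — deducing from $u_t=w_t=0$ at a time, or on a level set, that $G$ cannot increase — is not rigorous for weak solutions and is not needed; likewise $\{t:G(t)\leq s^*-\delta\}$ is closed but not obviously open, so the open-closed argument does not close either. The one workable alternative you mention (running the barrier at level $s^*-\delta'$ for every $\delta'<\delta$, where $\mathcal{E}(0)<\Lambda(s^*-\delta')$ is strict, and letting $\delta'\uparrow\delta$) would succeed, but you do not carry it out, and it is more roundabout than the one-line strict contradiction above.
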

\begin{proof}
Recall that, in subsection \ref{subsetW}, we have shown that the function $\Lambda(s)$ defined in \eqref{Lambda}
attains it maximum at $s=s^*$ over $\mathbb R^+$, and $\Lambda(s^*) \leq d$ due to \eqref{Ld}.
Since $\Lambda(t)$ is strictly increasing on $(0,s^*)$, we see that $\mathcal E(0) \leq \Lambda(s^*-\delta) < d$.
Also, since $(u_0,w_0)\in\tilde{\mathcal{W}}^{\delta}_1 \subset \mathcal W_1$, 
then thanks to Theorem \ref{thm3-1}, there exists a global solution $(u,v)$ with
$\mathcal{J}(u(t),w(t)) \leq   \mathcal E(0) \leq \Lambda(s^*-\delta)$ for all $t\geq0$. It
remains to show that $\|(u(t),w(t))\|_X\leq s^*-\delta$ for all $t\geq 0$. Since
$\|(u_0,w_0)\|_X\leq s^*-\delta$ and $(u,w)\in C(\mathbb{R}^+,X)$,
we assume to the contrary that there exists   $t_1>0$ such that
$\|(u(t_1),w(t_1))\|_X=s^*-\delta+\delta_0$ for some $\delta_0\in(0,\delta)$. By \eqref{3-13-1}, we have
$\mathcal{J}(u(t_1),w(t_1))\geq\Lambda(s^*-\delta+\delta_0)>\Lambda(s^*-\delta)$,
because $\Lambda(t)$ is strictly increasing on $(0,s^*)$.
This contradicts $\mathcal{J}(u(t),w(t))\leq\Lambda(s^*-\delta)$ for any $t\geq0$.
\end{proof}

\vspace{0.1 in}

\section{Energy Decay Rates}   \label{sec-decay}
This section is devoted to proving Theorem \ref{thm4-1}, namely, the uniform energy decay rates of potential well solutions.

First we remark that, if the solution decays, then at large time, the solution becomes ``small", so the behavior of damping terms near the origin 
determines the decay rates of solutions. 

In what  follows, we introduce some concave functions that capture the growth rates of damping terms near the origin.

\subsection{Concave functions that reflect the behavior of damping terms near the origin}  \label{concave}

Let $\phi_i:[0,\infty)\to[0,\infty)$ be concave,
 increasing, continuous functions vanishing at the origin, such
that, for $i=1,2,$
\begin{align}\label{4-1}
\phi_i(g_i(s)s)\geq |g_i(s)|^2+s^2,\ \ \mbox{for}\ |s|<1.
\end{align}
Also, we define a function $\Phi:[0,\infty)\to[0,\infty)$ by
\begin{align}\label{4-2}
\Phi(s):=\phi_1(s)+\phi_2(s)+s,\ \ s\geq0.
\end{align}
Note the function $\Phi$ is also concave, increasing, continuous and vanishing at the origin.

Now, we show that the concave functions $\phi_i$ ($i=1,2$) satisfying \eqref{4-1} can always be constructed. 
Indeed, recall that  $g_1$ and $g_2$ are continuous monotone
increasing functions passing the origin. If $g_1$ and $g_2$ are bounded above
and below by linear or super-linear functions near the origin, i.e.,
\begin{align}\label{4-3}
c_1|s|^m\leq |g_1(s)|\leq c_2|s|^m,\ \ c_3|s|^r\leq |g_2(s)|\leq
c_4|s|^r,\ \ \forall\ |s|<1,
\end{align}
where $m,r\geq 1$ and $c_i>0$, $i=1,2,3,4$, then we choose
\begin{align}\label{4-4}
\phi_1(s)=(c_1)^{-\frac{2}{m+1}}(1+c_2^2)s^{\frac{2}{m+1}},\ \
\phi_2(s)=(c_3)^{-\frac{2}{r+1}}(1+c_4^2)s^{\frac{2}{r+1}}.
\end{align}
To check that \eqref{4-4} satisfies \eqref{4-1}, we  calculate directly:
\begin{align}
\phi_1(g_1(s)s)& = (c_1)^{-\frac{2}{m+1}}(1+c_2^2)(g_1(s)s)^{\frac{2}{m+1}}
\geq c_1^{-\frac{2}{m+1}}(1+c_2^2)(c_1|s|^{m+1})^{\frac{2}{m+1}}\nonumber\\
&= (1+c_2^2)s^2\geq s^2+(c_2|s|^m)^2\geq s^2+|g_1(s)|^2,\
\mbox{for}\ \mbox{all}\ |s|<1.\nonumber
\end{align}
In particular, if $g_1$ and $g_2$ are both linearly bounded near the origin, 
then the explicit functions $\phi_1$ and $\phi_2$ given in \eqref{4-4} are both linear functions.

On the other hand, if $g_1$ and $g_2$ are bounded by sublinear functions near the origin, that is,
\begin{align}\label{4-5}
c_1|s|^{\kappa_1}\leq |g_1(s)|\leq c_2|s|^{\kappa_1},\ \
c_3|s|^{\kappa_2}\leq |g_2(s)|\leq c_4|s|^{\kappa_2},\ \ \mbox{for}\
\mbox{all}\ |s|<1,
\end{align}
where $0<\kappa_1,\kappa_2<1$ and  $c_i>0$ $(i=1,2,3,4)$. 
In this case, we can select
\begin{align}\label{4-6}
\phi_1(s)=(c_1)^{-\frac{2\kappa_1}{\kappa_1+1}}(1+c_2^2)s^{\frac{2\kappa_1}{\kappa_1+1}},\
\
\phi_2(s)=(c_3)^{-\frac{2\kappa_2}{\kappa_2+1}}(1+c_4^2)s^{\frac{2\kappa_2}{\kappa_2+1}}.
\end{align}

From \eqref{4-4} and \eqref{4-6}, we know that one can always construct $\phi_1$ and $\phi_2$ in the form
\begin{align}\label{4-7}
\phi_i(s)=C_is^{\nu_i},   \;\;i=1, 2,
\end{align}
for some constants $C_1$ and $C_2$, where
\begin{align} \label{4-8}
\nu_1=\frac{2}{m+1}\ \mbox{or}\ \frac{2\kappa_1}{\kappa_1+1},\ \
\nu_2=\frac{2}{r+1}\ \mbox{or}\ \frac{2\kappa_2}{\kappa_2+1},
\end{align}
depending on the growth rates of $g_1$ and $g_2$ near the origin, which are specified in \eqref{4-3} and \eqref{4-5}.

Define
\begin{align}\label{4-9}
\beta:=\max_{i=1,2}\left\{\frac{1}{\nu_i}\right\}.
\end{align}
Note that $\beta>1$ if at least one of $g_1$ and $g_2$ are not linearly bounded near the origin, and in this case we put
\begin{align}\label{4-10}
b:=(\beta-1)^{-1}>0.
\end{align}

\vspace{0.1 in}

\subsection{A stabilization estimate}

For convenience, we put:
\begin{align}    \label{Dt}
D(t):=\int^t_0\int_\Omega g_1(u_t)u_tdxd\tau+\int^t_0\int_\Gamma
g_2(w_t)w_td\Gamma d\tau.
\end{align}
Since $g_1(s)s\geq 0$ and $g_2(s) s \geq 0$, then $D(t)\geq
0$. Using this notation, the energy identity \eqref{3-15} can be written in the concise form:
\begin{align}\label{en-id}
\mathcal{E}(t)+D(t)=\mathcal{E}(0).
\end{align}
From (\ref{en-id}), we see that the behavior of damping terms determines the decay rates of the total energy $\mathcal{E}(t)$.

Define
\begin{align}\label{T0}
T_0:=\max\left\{1,\frac{1}{|\Omega|},\frac{1}{|\Gamma|},\frac{8c
c_0}{c-2}\right\},
\end{align}
where $c=\min\{p+1,q+1\}>2$ and $c_0>0$ is defined in (\ref{4-13-3}).

\begin{lem}\label{lem4-1}
Suppose that Assumption \ref{ass} and Assumption \ref{ass1}
hold. Assume that $1<p<5$, $k>1$, and $u_0\in L^{m+1}(\Omega)$.
Also assume $(u_0,w_0)\in\tilde{\mathcal{W}}^{\delta}_1$ and $\mathcal{E}(0)\leq\Lambda(s^*-\delta)$,
for a sufficiently small $\delta>0$.
In addition, suppose $u \in L^\infty(\mathbb{R}^+;L^{\frac{3}{2}(m-1)}(\Omega))$ if $m>5$.  Then
the global solution of system \eqref{PDE} furnished by Theorem \ref{thm3-1}
satisfies for all $T\geq T_0$,
\begin{align}\label{4-12}
\mathcal{E}(T)\leq\tilde{C}\Phi(D(T)),
\end{align}
where $T_0$ is given in \eqref{T0}, $\Phi$ is defined in
\eqref{4-2}, and $\tilde{C}>0$ defined in \eqref{Ctilde} is independent of $T$.
\end{lem}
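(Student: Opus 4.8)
The strategy is the classical energy-multiplier approach of Lasiecka--Tataru type, combined with the new feature that the closed set $\tilde{\mathcal{W}}^\delta_1$ is used \emph{inside} the stabilization estimate so that no lower-order terms survive. The plan is to integrate the energy identity \eqref{en-id} in time and estimate $\int_0^T \mathcal E(t)\,dt$ from above by $C\,\mathcal E(T) + C\,\Phi(D(T))$ using multiplier identities; then absorb the $\mathcal E(T)$ term and conclude. Concretely, I would first multiply the wave equation by $u$ and the plate equation by $w$, integrate over $\Omega\times(0,T)$ (resp. $\Gamma\times(0,T)$), and add. After integration by parts this produces a main term $\int_0^T(\|\nabla u\|_2^2+|\Delta w|_2^2)\,dt$ on one side, against kinetic-energy terms $\int_0^T(\|u_t\|_2^2+|w_t|_2^2)\,dt$, boundary coupling terms involving $\int_0^T(w_t,\gamma u)_\Gamma\,dt$ and $\int_0^T(\gamma u_t,\gamma u)_\Gamma$-type contributions, the damping contributions $\int_0^T\int_\Omega g_1(u_t)u\,dx\,dt$ and $\int_0^T\int_\Gamma g_2(w_t)w\,d\Gamma\,dt$, the source contributions $\int_0^T\int_\Omega f(u)u\,dx\,dt+\int_0^T\int_\Gamma h(w)w\,d\Gamma\,dt$, and the boundary-in-time terms $[(u_t,u)_\Omega+\cdots]_0^T$.

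The decisive point is how to handle the source terms. Since $(u(t),w(t))\in\tilde{\mathcal W}^\delta_1\subset\mathcal W_1$ for all $t\geq0$ by Lemma \ref{lem3-3}, and by the refined bound coming from \eqref{cW1} with $\|(u,w)\|_X\le s^*-\delta<s^*$, one gets a \emph{strict} inequality
\begin{align*}
(p+1)\!\int_\Omega F(u)\,dx+(q+1)\!\int_\Gamma H(w)\,d\Gamma\le \theta\big(\|\nabla u\|_2^2+|\Delta w|_2^2\big)
\end{align*}
for some $\theta=\theta(\delta)<1$. Because $\int_\Omega uf(u)\,dx+\int_\Gamma wh(w)\,d\Gamma=(p+1)\int_\Omega F(u)+(q+1)\int_\Gamma H(w)$ by \eqref{3-3}, the source terms arising from the multipliers are \emph{exactly} of this form, and can be absorbed into the main term $\int_0^T(\|\nabla u\|_2^2+|\Delta w|_2^2)\,dt$ with a leftover coefficient $(1-\theta)>0$. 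This is precisely the mechanism that avoids lower-order remainders: there is no additive constant and no lower-order norm of $u$ or $w$ to absorb by compactness-uniqueness. The kinetic terms $\int_0^T(\|u_t\|_2^2+|w_t|_2^2)\,dt$ are bounded via the energy identity by $C\big(\mathcal E(T)+D(T)\big)$ using $\mathcal E(t)+D(t)=\mathcal E(0)$ together with $\mathcal E(t)\ge\tfrac{c-2}{c}E(t)\ge \tfrac12\tfrac{c-2}{c}(\|u_t\|_2^2+|w_t|_2^2)$ from Theorem \ref{thm3-1}(iv); the boundary-in-time bracket terms are bounded by $C\,\mathcal E(T)+C\,\mathcal E(0)\le C\mathcal E(T)+C D(T)+C\mathcal E(T)$, again using monotonicity of $\mathcal E$.

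The genuinely delicate estimate is the damping-times-solution term and the boundary coupling term, treated in the subsections \ref{est-I2} the authors allude to. For $\int_0^T\int_\Omega g_1(u_t)u\,dx\,dt$ one splits $\Omega$ into $\{|u_t|<1\}$ and $\{|u_t|\ge1\}$: on the former, Young's inequality plus the concavity property \eqref{4-1} and Jensen's inequality (using $T\ge T_0\ge 1/|\Omega|$ so that the averaged argument of $\phi_1$ is controlled) yields a bound by $\varepsilon\int_0^T\|\nabla u\|_2^2\,dt + C_\varepsilon\,T\,\phi_1\!\big(\tfrac1T D(T)\big)\le \varepsilon\int_0^T E(t)\,dt+C_\varepsilon\Phi(D(T))$ after using that $T\ge T_0$ and $\Phi$ is concave; on the latter set, the growth bound $\alpha|s|^{m+1}\le g_1(s)s$ and Hölder with the exponent balance $p\tfrac{m+1}{m}<6$ from Assumption \ref{ass} (and the extra hypothesis $u\in L^\infty(\mathbb R^+;L^{\frac32(m-1)}(\Omega))$ when $m>5$) let one bound this by $\varepsilon\int_0^T E(t)\,dt+C_\varepsilon D(T)$. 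The boundary coupling term $\int_0^T(w_t,\gamma u)_\Gamma\,dt$ is controlled by the trace embedding $H^1_{\Gamma_0}(\Omega)\tinto H^{1/2-\epsilon}(\Gamma)$ and $\int_0^T|w_t|_2^2\,dt\le C(\mathcal E(T)+D(T))$. The hard part will be organizing these damping estimates so that \emph{all} occurrences of $\int_0^T E(t)\,dt$ collect on the left with a strictly positive coefficient (after also absorbing the $(1-\theta)$-leftover source contribution), giving $\int_0^T \mathcal E(t)\,dt\le C\,\mathcal E(T)+C\,\Phi(D(T))$. Finally, since $\mathcal E$ is non-increasing, $\int_0^T\mathcal E(t)\,dt\ge (T-T_0)\mathcal E(T)$ essentially (more precisely, integrate from $T_0$ to $T$ and iterate on intervals of length $T_0$, or directly: $(T-T_0)\mathcal E(T)\le\int_{T_0}^T\mathcal E(t)\,dt$), and choosing $T\ge T_0$ large enough relative to the constant $C$ (which forces the lower bound $T_0\ge \tfrac{8cc_0}{c-2}$ in \eqref{T0}) we get $\mathcal E(T)\le \tilde C\,\Phi(D(T))$ with $\tilde C$ independent of $T$, as defined in \eqref{Ctilde}. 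The main obstacle throughout is the supercritical source exponent, handled exactly by the parameter restriction $p\tfrac{m+1}{m}<6$ and the auxiliary $L^\infty$-in-time Lebesgue bound.
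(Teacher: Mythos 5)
Your overall architecture matches the paper's proof: test the variational identities with $(u,w)$ themselves, use the invariance of $\tilde{\mathcal{W}}^{\delta}_1$ (Lemma \ref{lem3-3}) together with \eqref{3-13-2} to absorb the source contribution into $\int_0^T E(t)\,dt$ with a coefficient $\xi<1$ and no lower-order remainder, handle the damping-times-solution terms by splitting into $\{|u_t|<1\}$ and $\{|u_t|\ge 1\}$ with Jensen's inequality for the concave $\phi_i$, and close the estimate using $T\mathcal{E}(T)\le\int_0^T E(t)\,dt$ and $T\ge T_0$. This is essentially the paper's argument.

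There is, however, one genuine error: your treatment of the kinetic term $\int_0^T(\|u_t\|_2^2+|w_t|_2^2)\,dt$, and the same issue recurs for $\int_0^T|w_t|_2^2\,dt$ inside the coupling term. You bound the kinetic energy pointwise by $\tfrac{2c}{c-2}\mathcal{E}(t)\le \tfrac{2c}{c-2}\mathcal{E}(0)=\tfrac{2c}{c-2}(\mathcal{E}(T)+D(T))$; integrating in time then produces $C\,T\,(\mathcal{E}(T)+D(T))$, not $C(\mathcal{E}(T)+D(T))$. The extra factor of $T$ multiplying $\mathcal{E}(T)$ is fatal: the final inequality would read $cT\mathcal{E}(T)\le CT\mathcal{E}(T)+\dots$, which cannot be closed by enlarging $T_0$. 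Moreover, if the kinetic term really were controlled by $C(\mathcal{E}(T)+D(T))$, the lemma would yield $\mathcal{E}(T)\le CD(T)$ and hence exponential decay in all cases, contradicting part (ii) of Theorem \ref{thm4-1}. The correct treatment --- which the paper uses, and which you in fact apply to the term $\int_0^T\int_\Omega g_1(u_t)u\,dx\,dt$ --- is to split $\Omega\times(0,T)$ into $\{|u_t|<1\}$ and $\{|u_t|\ge1\}$: on the latter $\alpha|u_t|^2\le g_1(u_t)u_t$ gives a bound by $\tfrac1\alpha D(T)$, while on the former \eqref{4-1} and Jensen's inequality give $T|\Omega|\,\phi_1(D(T))$. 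This is precisely where the concave function $\Phi$ enters in an essential way and why the conclusion is $\mathcal{E}(T)\le\tilde C\Phi(D(T))$ rather than $\tilde C D(T)$. With that correction (and the analogous one for $|w_t|_2$), the remainder of your plan goes through as in the paper.
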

\begin{proof}
Throughout the proof, we assume $T\geq T_0$, where $T_0$ is given in \eqref{T0}.
By the regularity of weak solutions specified in Definition \ref{def:weaksln}, 
we know that $u_t \in L^{m+1}(\Omega\times(0,T))$. 
Since we assume $u_0\in L^{m+1}(\Omega)$, then
the fundamental theorem of Calculus implies 
\begin{align}\label{4-12-1}
\int^T_0\int_\Omega|u|^{m+1}dxdt&= \int^T_0\int_\Omega\left|\int^t_0u_t(\tau)d\tau+u_0\right|^{m+1}dxdt\nonumber\\
&\leq C(T^{m+1}\|u_t(t)\|^{m+1}_{L^{m+1}(\Omega\times(0,T))}+T\|u_0\|^{m+1}_{m+1})<\infty.
\end{align}
This implies $u \in L^{m+1}(\Omega\times(0,T))$. We can use the same argument to get  
$w \in L^{r+1}(\Gamma\times(0,T))$. In this situation, we can replace
$\phi$ by $u$ in \eqref{wkslnwave} and $\psi$ by $w$ in \eqref{wkslnplt};
then adding the results yields
\begin{align}    \label{replace}
&\int_\Omega u_tudx\bigg|^T_0+\int_\Gamma(w_tw+\gamma
uw)d\Gamma\bigg|^T_0-\int^T_0(\|u_t\|^2_2+|w_t|^2_2)dt+\int^T_0(\|\nabla
u\|^2_2+|\Delta w|^2_2)dt\nonumber\\
&\qquad-2\int^T_0\int_\Gamma\gamma uw_td\Gamma
dt+\int^T_0\int_\Omega g_1(u_t)udxdt+\int^T_0\int_\Gamma
g_2(w_t)w d\Gamma dt\nonumber\\
&\qquad=\int^T_0\int_\Omega f(u)udxdt+\int^T_0\int_\Gamma
h(w)wd\Gamma dt. 
\end{align}
Multiply equality (\ref{replace}) by $1/2$, and use \eqref{qua} and \eqref{3-3}, we obtain
\begin{align}\label{4-13-1}
\int^T_0E(t)dt&=\underbrace{\int^T_0(\|u_t\|^2_2+|w_t|^2_2)dt}_{=I_3} \,\underbrace{-\frac{1}{2}\int_\Omega
u_tudx\bigg|^T_0-\frac{1}{2}\int_\Gamma (w_tw+\gamma u w)
d\Gamma\bigg|^T_0}_{=I_1}\nonumber\\
&\underbrace{+\int^T_0\int_\Gamma \gamma u w_td\Gamma
dt}_{=I_4} \, \underbrace{-\frac{1}{2}\int^T_0\int_\Omega
g_1(u_t)udxdt}_{=I_6} \, \underbrace{-\frac{1}{2}\int^T_0\int_\Gamma g_2(w_t)wd\Gamma
dt}_{=I_5}\nonumber\\
&\underbrace{+\frac{(p+1)}{2}\int^T_0\int_\Omega
F(u)dxdt+\frac{q+1}{2}\int^T_0\int_\Gamma H(w)d\Gamma dt}_{=I_2}.
\end{align}

In the sequel, we will estimate terms $I_i$  $(i=1,...,6)$ one by one. 

\subsubsection{\bf{Estimate for $I_1$}}

First, by using Cauchy-Schwarz inequality and Sobolev embedding theorem, we get
\begin{align}\label{4-13-2}
\left|\int_\Omega u_tudx+\int_\Gamma(w_tw+\gamma u
w)d\Gamma\right|&\leq\frac{1}{2}(\|u_t\|^2_2+|w_t|^2_2+\|u\|^2_2+2|w|^2_2+|\gamma
u|^2_2)\nonumber\\
&\leq\frac{1}{2}\Big[\|u_t\|^2_2+|w_t|^2_2+(c_1^*+c_*)\|\nabla
u\|^2_2+2c_2^*|\Delta w|^2_2\Big]\nonumber\\
&\leq c_0E(t),
\end{align}
where
\begin{align}\label{4-13-3}
c_0=\max\{1,(c_1^*+c_*),2c_2^*\},
\end{align}
and $c_1^*>0$ is the embedding constant of $\|u\|^2_2\leq
c_1^*\|\nabla u\|^2_2$, $c_2^*>0$ is the embedding constant of
$|w|^2_2\leq c_2^*|\Delta w|^2_2$ and $c_*>0$ is the embedding
constant of $|\gamma u|^2_2\leq c_*\|\nabla u\|^2_2$.

It follows from claim (iv) of Theorem \ref{thm3-1}, \eqref{4-13-2} and
\eqref{en-id} that
\begin{align}\label{4-14}
I_1\leq \frac{c_0}{2}(E(T)+E(0))\leq
\frac{cc_0}{2(c-2)}(\mathcal{E}(T)+\mathcal{E}(0))\leq
\frac{cc_0}{2(c-2)}(2\mathcal{E}(T)+D(T)).
\end{align}

\vspace{0.1 in}

\subsubsection{\bf{Estimate for $I_2$}}   \label{est-I2}
It follows from \eqref{3-5}, \eqref{normX} and \eqref{3-12} that
\begin{align}\label{4-15}
I_2&\leq \frac{1}{2}M(p+1)\int^T_0\|u\|^{p+1}_{p+1}dt+\frac{1}{2}M(q+1)\int^T_0|w|^{q+1}_{q+1}dt\nonumber\\
&\leq \frac{1}{2}M(p+1)K_1\int^T_0\|\nabla u\|^{p+1}_{2}dt+\frac{1}{2}M(q+1)K_2\int^T_0|\Delta w|^{q+1}_{2}dt\nonumber\\
&\leq \frac{1}{2}M(p+1)K_1\int^T_0  \|(u,w)\|^{p+1}_{X}dt +\frac{1}{2}M(q+1)K_2 \|(u,w)\|^{q+1}_{X}dt\nonumber\\
&=\frac{1}{2} \int^T_0\|(u,w)\|^{2}_{X}[M(p+1)K_1\|(u,w)\|^{p-1}_{X}+M(q+1)K_2\|(u,w)\|^{q-1}_{X}]dt.
\end{align}

Since $(u_0,w_0)\in\tilde{\mathcal{W}}^{\delta}_1$ and $\mathcal{E}(0)\leq\Lambda(s^*-\delta)$
and thanks to Lemma \ref{lem3-3}, we have $(u(t),w(t))\in \tilde{\mathcal{W}}^{\delta}_1$ for all $t\geq0$.
Then due to the definition of $\tilde{\mathcal{W}}^{\delta}_1$, i.e., formula (\ref{3-14}), 
we know that 
\begin{align}   \label{b-del}
\|(u(t),w(t))\|_X\leq s^*-\delta, \;\; \text{for all} \,\, t\geq 0. 
\end{align}

Then, we obtain from \eqref{4-15} and \eqref{b-del} that
\begin{align}\label{4-16}
I_2& \leq \frac{1}{2}\int^T_0\|(u,w)\|^{2}_{X}[M(p+1)K_1(s^* - \delta)^{p-1}+M(q+1)K_2(s^* - \delta)^{q-1}]dt\nonumber\\
&=\frac{1}{2}\xi \int^T_0\|(u,w)\|^{2}_{X}\leq \xi\int^T_0E(t)dt,
\end{align}
where the constant $\xi$ is defined as
\begin{align}  \label{def-xi}
\xi := M(p+1)K_1(s^* - \delta)^{p-1}+M(q+1)K_2(s^* - \delta)^{q-1} <1.
\end{align}
The fact that $\xi <1$ is because of (\ref{3-13-2}).

In sum, we conclude that there exists a constant $0<\xi<1$ such that
\begin{align}\label{4-17}
I_2\leq \xi\int^T_0E(t) dt.
\end{align}
We stress that the fact that $\xi$ is strictly less than 1 is crucial for our argument. 
Because, the right-hand side of \eqref{4-17} can be completely absorbed by the term $\int_0^T E(t) dt$ on the left-hand side of \eqref{4-13-1}. 
This makes the proof concise because there are no lower-order terms appearing in the stabilization estimate.

\vspace{0.1 in}

\subsubsection{\bf{Estimate for $I_3$}}
We define
\begin{align}  \label{AO}
A_\Omega:=\{(x,t)\in \Omega\times (0,T):|u_t(x,t)|<1\},
\end{align}
and
\begin{align}   \label{BO}
B_\Omega:=\{(x,t)\in \Omega\times
(0,T):|u_t(x,t)|\geq1\}.
\end{align}
From Assumption \ref{ass}, we infer that
$$
\alpha|s|^2\leq\alpha|s|^{m+1}\leq g_1(s)s,\ \ \forall\ |s|\geq1,
$$
since $m\geq 1$.

Now we use the concave functions $\phi_1$ and $\phi_2$ constructed in subsection \ref{concave}. 
Recall that $\phi_1$ and $\phi_2$ are related to the growth rates of $g_1$ and $g_2$ near the origin, respectively. 

By \eqref{4-1} and noting that $\phi_1$ maps $[0,\infty)$ to $[0,\infty)$, we derive that
\begin{align}\label{4-20-1}
\int^T_0\|u_t(t)\|^2_2dx&=\int_{A_\Omega}|u_t|^2dxdt+\int_{B_\Omega}|u_t|^2dxdt\nonumber\\
&\leq \int_{A_\Omega}\phi_1(g_1(u_t)u_t)dxdt+\frac{1}{\alpha}\int_{B_\Omega}g_1(u_t)u_tdxdt\nonumber\\
&\leq \int_0^T \int_{\Omega}   \phi_1(g_1(u_t)u_t)dxdt
+\frac{1}{\alpha}   \int_0^T \int_{\Omega}  g_1(u_t)u_tdxdt.
\end{align}
Since $\phi_1$ is concave, we can use Jensen's inequality to obtain
\begin{align}  \label{4-20-2}
\frac{1}{T|\Omega|}\int_0^T \int_{\Omega}   \phi_1(g_1(u_t)u_t)dxdt 
&\leq \phi_1\left(   \frac{1}{T|\Omega|}   \int^T_0\int_\Omega g_1(u_t)u_tdxdt\right)   \notag\\
&\leq  \phi_1\left(   \int^T_0\int_\Omega g_1(u_t)u_tdxdt\right),
\end{align}
where we have used the fact that $\phi$ is increasing and  $T|\Omega| \geq 1$ 
because $T\geq T_0 \geq \frac{1}{|\Omega|}$ from \eqref{T0}.

Combining \eqref{4-20-1} and \eqref{4-20-2} yields
\begin{align}\label{4-20}
\int^T_0\|u_t(t)\|^2_2dx \leq T|\Omega|\phi_1\left(\int^T_0\int_\Omega
g_1(u_t)u_tdxdt\right)+\frac{1}{\alpha}\int^T_0\int_\Omega g_1(u_t)u_t dxdt.
\end{align}

In the same manner, we can show 
\begin{align}\label{4-21}
\int^T_0|w_t(t)|^2_2 dt\leq T|\Gamma|\phi_2\left(\int^T_0\int_\Gamma
g_2(w_t)w_td\Gamma dt\right)+\frac{1}{\alpha}\int^T_0\int_\Gamma
g_2(w_t)w_td\Gamma dt.
\end{align}
Then it follows from \eqref{4-20} and \eqref{4-21} that
\begin{align}\label{4-22}
I_3&\leq T|\Omega|\phi_1\left(\int^T_0\int_\Omega
g_1(u_t)u_tdxdt\right)+T|\Gamma|\phi_2\left(\int^T_0\int_\Gamma
g_2(w_t)w_td\Gamma dt\right)\nonumber\\
&\quad+\frac{1}{\alpha}\left[\int^T_0\int_\Omega
g_1(u_t)u_tdxdt+\int^T_0\int_\Gamma g_2(w_t)w_td\Gamma dt\right].
\end{align}

\vspace{0.1 in}

\subsubsection{\bf{Estimate for $I_4$}}

By using Cauchy-Schwarz inequality, the embedding $|\gamma u|^2_2\leq c_*\|\nabla u\|^2_2$, and Young's inequality, we obtain
\begin{align*}
I_4  \leq   \int_0^T |\gamma u|_2 |w_t|_2  dt   \leq c_*^{1/2}   \int_0^T \|\nabla u\|_2 |w_t|_2  dt
\leq \frac{\varepsilon}{2} \int^T_0\|\nabla u\|^2_2dt+C_{\varepsilon}\int^T_0|w_t|^2_2dt,
\end{align*}
and together with \eqref{4-21}, we obtain
\begin{align}\label{4-23}
I_4&\leq \varepsilon\int^T_0E(t)dt
+T |\Gamma|  C_{\varepsilon} \phi_2\left(\int^T_0\int_\Gamma
g_2(w_t)w_td\Gamma dt\right)+\frac{C_\varepsilon}{\alpha}\int^T_0\int_\Gamma
g_2(w_t)w_td\Gamma dt.
\end{align}

\vspace{0.1 in}

\subsubsection{\bf{Estimate for $I_5$}}

To estimate $I_5$, as $A_\Omega$ and $B_\Omega$, we define
$$
A_\Gamma:=\{(x,t)\in \Gamma\times (0,T):|w_t(x,t)|<1\},
$$
and
$$B_\Gamma:=\{(x,t)\in \Gamma\times
(0,T):|w_t(x,t)|\geq1\}.
$$
By using H\"{o}lder's inequality, Young's inequality and
the definition of $E(t)$, we get that for any $\varepsilon>0$,
\begin{align}\label{4-24}
&\int^T_0\int_\Gamma|g_2(w_t)w|d\Gamma
dt =\int_{A_\Gamma}|g_2(w_t)w|d\Gamma
dt+\int_{B_\Gamma}|g_2(w_t)w|d\Gamma dt\nonumber\\
&\qquad\leq\left(\int^T_0|w|^2_2dt\right)^{\frac{1}{2}}\left(\int_{A_\Gamma}|g_2(w_t)|^2d\Gamma
dt\right)^{\frac{1}{2}}+\int_{B_\Gamma}|g_2(w_t)w|d\Gamma dt\nonumber\\
&\qquad\leq\varepsilon\int^T_0E(t)dt+C_\varepsilon\int_{A_\Gamma}|g_2(w_t)|^2d\Gamma
dt+\int_{B_\Gamma}|g_2(w_t)w|d\Gamma dt,
\end{align}
where we have used the Poincar\'e inequality $|w|_2 \leq C|\Delta w|_2^2 \leq C E(t)$.

Since $T\geq T_0 \geq  \frac{1}{|\Gamma|}$ from \eqref{T0}, we have $T |\Gamma| \geq 1$.
Also, recall the function $\phi_2 : [0,\infty) \rightarrow [0,\infty)$ is concave. 
Then, we can use Jensen's inequality and \eqref{4-1} to deduce
\begin{align}\label{4-25}
\int_{A_\Gamma}|g_2(w_t)|^2d\Gamma dt\leq
\int_{A_\Gamma}\phi_2(g_2(w_t)w_t)d\Gamma dt\leq
T|\Gamma|\phi_2\left(\int^T_0\int_\Gamma g_2(w_t)w_td\Gamma
dt\right).
\end{align}
Recalling Assumption \ref{ass}, we have $|g_2(s)|\leq\beta|s|^r$ for all $|s|\geq1$. Then H\"{o}lder's inequality implies
\begin{align}\label{4-26}
\int_{B_\Gamma}|g_2(w_t)w|d\Gamma dt &\leq\left(\int_{B_\Gamma}|w|^{r+1}d\Gamma
dt\right)^{\frac{1}{r+1}}\left(\int_{B_\Gamma}|g_2(w_t)|^{\frac{r+1}{r}}d\Gamma
dt\right)^{\frac{r}{r+1}}\nonumber\\
&\leq\left(\int^T_0|w|^{r+1}_{r+1}dt\right)^{\frac{1}{r+1}}\left(\int_{B_\Gamma}|g_2(w_t)||g_2(w_t)|^{\frac{1}{r}}d\Gamma
dt\right)^{\frac{r}{r+1}}\nonumber\\
&\leq\beta^{\frac{1}{r+1}}\left(\int^T_0|w|^{r+1}_{r+1}dt\right)^{\frac{1}{r+1}}\left(\int_{B_\Gamma}|g_2(w_t)|w_td\Gamma
dt\right)^{\frac{r}{r+1}}.
\end{align}
Recall that the claim (iii) of Theorem \ref{thm3-1} tells us $E(t)<\frac{cd}{c-2}$ for all $t\geq 0$. 
Also, Sobolev embedding shows $|w|_{r+1} \leq C|\Delta w|_2$. Hence, 
\begin{align}  \label{4-26-1}
\int^T_0|w|^{r+1}_{r+1}dt\leq C\int^T_0|\Delta w|^{r+1}_2dt\leq
C\int^T_0E^{\frac{r+1}{2}}(t)dt\leq C(c,d,r)\int^T_0E(t)dt,
\end{align}
since $r\geq 1$.

Combining \eqref{4-26} and \eqref{4-26-1}, and using Young's inequality, we obtain that for any $\varepsilon>0$,
\begin{align}\label{4-27}
\int_{B_\Gamma}|g_2(w_t)w|d\Gamma
dt&\leq C\left(\int^T_0E(t)dt\right)^{\frac{1}{r+1}}\left(\int^T_0\int_\Gamma|g_2(w_t)w_t|d\Gamma
dt\right)^{\frac{r}{r+1}}\nonumber\\
&\leq \varepsilon\int^T_0E(t)dt+C_\varepsilon\int^T_0\int_\Gamma|g_2(w_t)w_t|d\Gamma
dt.
\end{align}
Substituting \eqref{4-25} and \eqref{4-27} into \eqref{4-24}, we get
for any $\varepsilon>0$,
\begin{align}\label{4-28}
I_5 = \frac{1}{2} \int^T_0\int_\Gamma |g_2(w_t)w|d\Gamma dt&\leq
\varepsilon\int^T_0E(t)dt+C_\varepsilon
T|\Gamma|\phi_2\left(\int^T_0\int_\Gamma g_2(w_t)w_td\Gamma
dt\right)\nonumber\\
&\quad+C_\varepsilon\int^T_0\int_\Gamma g_2(w_t)w_td\Gamma dt.
\end{align}

\vspace{0.1 in}

\subsubsection{\bf{Estimate for $I_6$}}
Recall the sets $A_{\Omega}$ and $B_{\Omega}$ are defined in (\ref{AO})-(\ref{BO}).
Using H\"older's inequality and Young's inequality, we obtain
\begin{align}\label{4-29}
&\int^T_0\int_\Omega|g_1(u_t)u|dx
dt =\int_{A_\Omega}|g_1(u_t)u|dx
dt+\int_{B_\Omega}|g_1(u_t)u|dx dt\nonumber\\
&\qquad\leq\left(\int^T_0\|u\|^2_2dt\right)^{\frac{1}{2}}\left(\int_{A_\Omega}|g_1(u_t)|^2dx
dt\right)^{\frac{1}{2}}+\int_{B_\Omega}|g_1(u_t)u|dx dt\nonumber\\
&\qquad\leq\varepsilon\int^T_0E(t)dt+C_\varepsilon\int_{A_\Omega}|g_1(u_t)|^2dx
dt+\int_{B_\Omega}|g_1(u_t)u|dx dt.
\end{align}
for any $\varepsilon>0$.

Using \eqref{4-1} and Jensen's inequality, we obtain
\begin{align}\label{4-29-1}
\int_{A_\Omega}|g_1(u_t)|^2dx
dt\leq\int_{A_\Omega}\phi_1(g_1(u_t)u_t)dxdt\leq
T|\Omega|\phi_1\left(\int^T_0\int_\Omega g_1(u_t)u_tdxdt\right),
\end{align}
due to $T|\Omega|\geq1$.

We consider two cases to estimate the last term on the right-hand
side of \eqref{4-29}.\\
 \textbf{Case 1.}
$m\leq5.$\\
It follows from Assumption \ref{ass} that
$|g_1(s)|\leq\beta|s|^m\leq\beta|s|^5$ for $|s|\geq1$. Then we apply H\"{o}lder's inequality to deduce
\begin{align}
\int_{B_\Omega}|g_1(u_t)u|dxdt
&\leq\left(\int_{B_\Omega}|u|^6 dxdt\right)^{\frac{1}{6}}\left(\int_{B_\Omega}|g_1(u_t)|^{\frac{6}{5}} dx dt \right)^{\frac{5}{6}}\nonumber\\
&\leq\left(\int^T_0\|u\|^6_6dt\right)^{\frac{1}{6}}\left(\int_{B_\Omega}|g_1(u_t)||g_1(u_t)|^{\frac{1}{5}}  dx dt \right)^{\frac{5}{6}}\nonumber\\
&\leq\beta^{\frac{1}{6}}\left(\int^T_0\|u\|^6_6dt\right)^{\frac{1}{6}}\left(\int_{B_\Omega}|g_1(u_t)||u_t|dxdt\right)^{\frac{5}{6}},\nonumber
\end{align}
which along with
$$
\int^T_0\|u\|^6_6dt\leq C\int^T_0\|\nabla u\|^6_2dt\leq
C\int^T_0E^3(t)dt\leq C(c,d) \int^T_0E(t)dt,
$$
yields that 
\begin{align}\label{4-30}
\int_{B_\Omega}|g_1(u_t)u|dxdt&\leq C\left(\int^T_0E(t)dt\right)^{\frac{1}{6}}\left(\int_{B_\Omega}g_1(u_t)u_tdxdt\right)^{\frac{5}{6}}\nonumber\\
&\leq\varepsilon\int^T_0E(t)dt+C_\varepsilon\int^T_0\int_\Omega
g_1(u_t)u_t dxdt,
\end{align}
for any $\varepsilon>0$.

Combining \eqref{4-29-1} and \eqref{4-30} with \eqref{4-29}, we
obtain that for any $\varepsilon>0$,
\begin{align}\label{4-30-1}
I_6 = \frac{1}{2}\int^T_0\int_\Omega|g_1(u_t)u|dxdt&\leq \varepsilon\int^T_0E(t)dt+C_\varepsilon
T|\Omega|\phi_1\left(\int^T_0\int_\Omega
g_1(u_t)u_tdxdt\right)\nonumber\\
&\quad+C_\varepsilon\int^T_0\int_\Omega g_1(u_t)u_tdxdt,\ \ \ \mbox{if}\
\ m\leq 5.
\end{align}

\noindent
\textbf{Case 2.} $m>5$.

In this case, we assume an extra regularity on $u$, namely, $u \in L^{\infty}(\mathbb R^+; L^{\frac{3}{2}(m+1)}(\Omega))$. 

H\"{o}lder's inequality implies
\begin{align}\label{4-30-2}
\int_{B_\Omega}|g_1(u_t)u|dxdt\leq
\left(\int_{B_\Omega}|g_1(u_t)|^{\frac{m+1}{m}}dxdt\right)^{\frac{m}{m+1}}\left(\int_{B_\Omega}|u|^{m+1}dxdt\right)^{\frac{1}{m+1}}.
\end{align}
Since $|g_1(s)|\leq\beta |s|^m$ for $|s|\geq 1$, we have
\begin{align}\label{4-30-3}
\int_{B_\Omega}|g_1(u_t)|^{\frac{m+1}{m}}dxdt=\int_{B_\Omega}|g_1(u_t)||g_1(u_t)|^{\frac{1}{m}}dxdt\leq\beta^{\frac{1}{m}}\int_{B_\Omega}g_1(u_t)u_tdxdt.
\end{align}
Moreover, by H\"{o}lder's inequality and Sobolev embedding, we get
\begin{align}\label{4-30-4}
\int_{B_\Omega}|u|^{m+1}dxdt&\leq \int^T_0\|u\|^2_6\|u\|^{m-1}_{\frac{3}{2}(m-1)}dt  
\leq \int^T_0\|\nabla u\|^2_2\|u\|^{m-1}_{\frac{3}{2}(m-1)}dt\nonumber\\
&\leq C\|u\|^{m-1}_{L^\infty(\mathbb{R}^+;L^{\frac{3}{2}(m-1)}(\Omega))}\int^T_0E(t)dt.
\end{align}
Substituting \eqref{4-30-3} and \eqref{4-30-4} into \eqref{4-30-2}
and using Young's inequality, we obtain that for any
$\varepsilon>0$,
\begin{align}\label{4-30-5}
\int_{B_\Omega}|g_1(u_t)u|dxdt\leq\varepsilon\|u\|^{m-1}_{L^\infty(\mathbb{R}^+;L^{\frac{3}{2}(m-1)}(\Omega))}\int^T_0E(t)dt
+C_\varepsilon\int^T_0\int_\Omega g_1(u_t)u_tdxdt.
\end{align}
Inserting \eqref{4-29-1} and \eqref{4-30-5} into \eqref{4-29}, we get that for any $\varepsilon>0$,
\begin{align}\label{4-31}
&I_6 = \frac{1}{2}\int_0^T  \int_{\Omega} |g_1(u_t)u|dxdt \nonumber\\
&\qquad\leq\varepsilon \left(1+\|u\|^{m-1}_{L^\infty(\mathbb{R}^+;L^{\frac{3}{2}(m-1)}(\Omega))}\right)\int^T_0E(t)dt+C_\varepsilon
T|\Omega|\phi_1\left(\int^T_0\int_\Omega
g_1(u_t)u_tdxdt\right)\nonumber\\
&\qquad \quad+C_\varepsilon\int^T_0\int_\Omega g_1(u_t)u_tdxdt,\ \
\mbox{if}\ \ m>5.
\end{align}

We have finished estimating all terms $I_i$ $(i=1,...,6)$ from the right-hand side of equality \eqref{4-13-1}.

Recalling that, if $m>5$, we assume an extra regularity assumption on $u$ that $u\in
L^\infty(\mathbb{R}^+;L^{\frac{3}{2}(m-1)}(\Omega))$. 
Then, by taking $\varepsilon>0$ small enough, we can apply estimates \eqref{4-14}, \eqref{4-17}, \eqref{4-22}, \eqref{4-23},  \eqref{4-28},
\eqref{4-30-1} and \eqref{4-31} to (\ref{4-13-1}). It follows that
\begin{align}\label{4-32}
\frac{1-\xi}{2}\int^T_0E(t)dt\leq \frac{cc_0}{c-2}(2\mathcal{E}(T)+D(T))+C(\varepsilon,\alpha) (|\Omega| + |\Gamma|)T\Phi(D(T)),
\end{align}
where $T\geq T_0\geq1$. Note here the concave function $\Phi(s):=\phi_1(s)+ \phi_2(s) +s$,
and $D(T)$ is defined by \eqref{Dt}. Also, the constant $0<\xi<1$ is defined by \eqref{def-xi}.

 Noting that $\mathcal{E}(t)$ is non-increasing in time and $\mathcal{E}(t)\leq E(t)$ for any $t\geq0$, we have
\begin{align}  \label{TET}
T\mathcal{E}(T)\leq\int^T_0\mathcal{E}(t)dt\leq\int^T_0E(t)dt. 
\end{align}

Because of \eqref{4-32} and \eqref{TET}, we see that
\begin{align*}
\Big(\frac{1-\xi}{2}   T   -    \frac{2cc_0}{c-2}\Big)   \mathcal{E}(T)  \leq  \frac{c c_0}{c-2}D(T)+C(\varepsilon,\alpha) \Big(|\Omega| + |\Gamma| \Big)T\Phi(D(T)).\end{align*}

Since $T\geq T_0\geq \frac{8c c_0}{(c-2)(1-\xi)}$, we obtain
\begin{align}\label{4-33}
\frac{1-\xi}{4}T\mathcal{E}(T)\leq  \frac{c c_0}{c-2}D(T)+C(\varepsilon,\alpha) \Big(|\Omega| + |\Gamma|\Big)T\Phi(D(T)).
\end{align}

Because $T\geq T_0\geq 1$, then $\frac{1}{T}\leq 1$, we infer from \eqref{4-33} that
\begin{align}\label{4-34}
\frac{1-\xi}{4}\mathcal{E}(T)\leq \frac{c c_0}{c-2}D(T)+C(\varepsilon,\alpha) \Big(|\Omega| + |\Gamma| \Big)\Phi(D(T)).
\end{align}
Define the following constant $\tilde C$ independent of $T$:
\begin{align} \label{Ctilde}
\tilde{C}:=\frac{4}{1-\xi}\left[\frac{c
c_0}{c-2}+C(\varepsilon,\alpha) (|\Omega| + |\Gamma|)\right].
\end{align}
Then we get from \eqref{4-34} that for any $T\geq T_0$,
$$
\mathcal{E}(T)\leq\tilde{C}\Phi(D(T)).
$$
This completes the proof of Lemma \ref{lem4-1}.
\end{proof}

\vspace{0.1 in}

\subsection{Completion of the proof of Theorem \ref{thm4-1}}

Now we use the stabilization estimate provided by Lemma \ref{lem4-1}
to prove Theorem \ref{thm4-1}. The strategy of the proof is adopted from paper \cite{LTa} by Lasiecka and Tataru.
The idea is to relate the stabilization estimate to an ODE, and the decay rate of the solution of the ODE determines the energy decay rate of the PDE.

\begin{proof}[Proof of Theorem \ref{thm4-1}]
Let us fix a time $T\geq T_0$. From  \eqref{4-12}, we know that 
$$
\mathcal{E}(T) \leq \tilde{C} \Phi(D(T)).
$$
Note that $\mathcal{E}(T)+D(T)=\mathcal{E}(0)$. 
We define a concave function $\tilde \Phi: [0,\infty)  \rightarrow [0,\infty) $ by 
\begin{align}   \label{tildePhi}
\tilde \Phi(s) :=\tilde{C} \Phi(s) = \tilde{C} (\phi_1(s)+ \phi_2(s) + s).
\end{align}
Notice that $\tilde \Phi$ is concave, increasing, continuous and satisfying $\tilde \Phi(0)=0$.
Then
\begin{align}\label{4-69}
\mathcal{E}(T)\leq \tilde \Phi(D(T))=\tilde \Phi(\mathcal{E}(0)-\mathcal{E}(T)).
\end{align}
Hence, \eqref{4-69} yields
\begin{align}\label{4-70}
(I  +   \tilde \Phi^{-1}) \mathcal{E}(T) \leq \mathcal{E}(0).
\end{align}
Here, $\tilde \Phi^{-1}$ is convex, increasing, continuous and vanishing at the origin.

Recall that $(u(t),w(t)) \in \tilde {\mathcal W}_1^{\delta}$ and $\mathcal E(t) \leq \mathcal E(0) \leq \Lambda(s^* -\delta)$ for all $t\geq 0$.
Therefore, we can iterate \eqref{4-70} on $[mT,(m+1)T]$, $m=0,1,2,...,$ to obtain
$$
 (I  +   \tilde \Phi^{-1})  \mathcal{E}((m+1)T) \leq\mathcal{E}(mT),\
\ m=0,1,2,....
$$
Following \cite[Lemma 3.3]{LTa}, we have
\begin{align}\label{4-71}
\mathcal{E}(mT)\leq \sigma(m),\ \ m=0,1,2,3....
\end{align}
Here $\sigma(t)$ is the solution of the ODE
\begin{align}\label{4-72}
\begin{cases}
    \sigma'(t)+[I-(I+\tilde \Phi^{-1})^{-1}]\sigma(t)=0, \\
    \sigma(0)=\mathcal{E}(0).
 \end{cases}
\end{align}
Since $I-(I+\tilde \Phi^{-1})^{-1}=(I+\tilde \Phi)^{-1}$, we can reduce
\eqref{4-72} to
\begin{align}\label{4-73}
\begin{cases}
    \sigma'(t)+(I+\tilde \Phi)^{-1}\sigma(t)=0, \\
    \sigma(0)=\mathcal{E}(0),
 \end{cases}
\end{align}
where \eqref{4-73} has a unique solution on $[0,\infty)$. Noting
that $\tilde \Phi$ is increasing and vanishing at the origin, then
$(I+\tilde \Phi)^{-1}$ is also increasing and vanishing at the origin. Rewrite
\eqref{4-73} as $\sigma'(t)=-(I+\tilde \Phi)^{-1}\sigma(t)$ to obtain
$\sigma(t)$ is decreasing and approaching zero from above as $t\to\infty$.

For any $0<T<t$, there is an $m\in\mathbb{N}$ such that
$t=mT+\delta$, $0\leq\delta<T$, hence
$m=\frac{t}{T}-\frac{\delta}{T}>\frac{t}{T}-1$.  Since
$\mathcal{E}(t)$ and $\sigma(t)$ are monotone decreasing, we can
infer from \eqref{4-71} that for any $t>T$,
\begin{align}\label{4-74}
\mathcal{E}(t)=\mathcal{E}(mT+\delta)\leq\mathcal{E}(mT)\leq
\sigma(m)\leq \sigma\left(\frac{t}{T}-1\right).
\end{align}

\vspace{0.02 in}

(i) If $g_1$ and $g_2$ are linearly bounded near the origin, we see from \eqref{4-4} that $\phi_1$ and $\phi_2$ are linear, and
hence $\tilde \Phi$ is also linear because of (\ref{tildePhi}). Therefore $(I+\tilde \Phi)^{-1}$ is
a linear function. Then, \eqref{4-73} implies that for a constant $\gamma>0$,
\begin{align}
\begin{cases}
    \sigma'(t)+\gamma \sigma(t)=0, \\
    \sigma(0)=\mathcal{E}(0),
\end{cases}\nonumber
\end{align}
Then one has
$$
\sigma(t)=\mathcal{E}(0)e^{-\gamma t}.
$$
By using \eqref{4-74}, we obtain that for any $t>T$,
\begin{align}\label{4-75}
\mathcal{E}(t)\leq
\mathcal{E}(0)e^{-\gamma(\frac{t}{T}-1)}= e^{\gamma}\mathcal{E}(0) e^{-\frac{\gamma}{T}t}.
\end{align}
Putting $a=\frac{\gamma}{T}$, we get \eqref{exp}.\\

\vspace{0.02 in}

(ii) If at least one of $g_1$  and $g_2$ are not linearly bounded
near the origin. By \eqref{4-7}  we can select
$\phi_i(s)=C_is^{\nu_i}$, where $\nu_i \in (0,1)$ is given in \eqref{4-8}.

Note that if $\lambda=(I+\tilde \Phi)^{-1}(s)$ for $s\geq0$, then
$\lambda>0$. Moreover, for any $0\leq\lambda\leq1$,
\begin{align}
s&=(I+\tilde \Phi)\lambda=\lambda+\tilde{C} \left(\phi_1(\lambda)+  \phi_2(\lambda)  +   \lambda\right) \nonumber\\
&\leq C \left( \phi_1(\lambda)+  \phi_2(\lambda)  +    \lambda\right)\leq
C\lambda^{\min\{\nu_1, \nu_2 \}}. \nonumber
\end{align}
Then   there exists $C_0>0$ such that $\lambda\geq C_0s^{\beta}$
for any $0\leq\lambda\leq1$ where $\beta =  \max\{   \frac{1}{\nu_1},  \frac{1}{\nu_2}\}>1$, namely,
\begin{align}\label{4-76}
(I+\tilde \Phi)^{-1}(s)\geq C_0s^{\beta}\ \ \mbox{if}\ \
0\leq(I+\tilde \Phi)^{-1}(s)\leq1.
\end{align}
Noting that $(I+\tilde \Phi)^{-1}(\sigma(t))$ is decreasing to zero as
$t\to\infty$, then there exists $t_0\geq0$ such that
$(I+\tilde \Phi)^{-1}(\sigma(t))\leq1$, whenever $t\geq t_0$. From
\eqref{4-76}, it follows that for $t\geq t_0$,
$$
\sigma'(t)=-(I+\tilde \Phi)^{-1}(\sigma(t))\leq -C_0\sigma^{\beta}(t).
$$
Therefore, for all $t \geq t_0$, $\sigma(t)\leq\tilde{\sigma}(t)$, where
$\tilde{\sigma}(t)$ is the solution of
\begin{align}
\begin{cases}
    \tilde{\sigma}'(t)+C_0\tilde{\sigma}^{\beta}(t)=0, \\
    \tilde{\sigma}(t_0)=\sigma(t_0),
  \end{cases}\nonumber
\end{align}
from which we get for all $t \geq t_0$,
$$
\tilde{\sigma}(t)=[C_0(\beta-1)(t-t_0)+\sigma^{1-\beta}(t_0)]^{- \frac{1}{\beta-1}},
$$
which, along with \eqref{4-74}, implies that for any $t\geq
(t_0+1)T$,
$$
\mathcal{E}(t)\leq \sigma\left(\frac{t}{T}-1\right)\leq
\tilde{\sigma}\left(\frac{t}{T}-1\right)=\left[C_0(\beta-1)\left(\frac{t}{T}-1-t_0\right)+\sigma^{1-\beta}(t_0)\right]^{- \frac{1}{\beta-1}}.
$$
Since $\sigma(t_0)$ depends on $\mathcal{E}(0)$, there exists a
positive constant $C(\mathcal E(0))$ depending on $\mathcal{E}(0)$ such that for
any $t\geq0$,
$$
\mathcal{E}(t) \leq C(\mathcal E(0))  (1+t)^{-\frac{1}{\beta-1}}.
$$
This ends the proof.
\end{proof}

\def\cprime{$'$}

\end{document}